\numberwithin{equation}{section}
\setlist[enumerate,1]{label={\rm(\arabic*)}, ref={\rm\arabic*}}
\theoremstyle{plain}
\newtheorem{thm}{Theorem}[section]
\newtheorem{prop}[thm]{Proposition}
\newtheorem{lem}[thm]{Lemma}
\newtheorem{cor}[thm]{Corollary}
\theoremstyle{definition}
\newtheorem{problem}[thm]{Problem}
\newtheorem{defn}[thm]{Definition}
\theoremstyle{remark}
\newtheorem{rem}[thm]{Remark}
\renewcommand{\AA}{\mathbb{A}}
\newcommand{\FF}{\mathbb{F}}
\newcommand{\GG}{\mathbb{G}}
\newcommand{\LL}{\mathbb{L}}
\newcommand{\QQ}{\mathbb{Q}}
\newcommand{\RR}{\mathbb{R}}
\newcommand{\ZZ}{\mathbb{Z}}
\newcommand{\ba}{\mathbf{a}}
\newcommand{\bd}{\mathbf{d}}
\newcommand{\bv}{\mathbf{v}}
\newcommand{\Var}{\mathbf{Var}}
\newcommand{\cM}{\mathcal{M}}
\newcommand{\cO}{\mathcal{O}}
\newcommand{\mult}{\mathrm{mult}}
\newcommand{\st}{\mathrm{st}}
\newcommand{\sep}{\mathrm{sep}}
\newcommand{\sing}{\mathrm{sing}}
\DeclareMathOperator{\Gal}{Gal}
\DeclareMathOperator{\Hilb}{Hilb}
\DeclareMathOperator{\Image}{Im}
\DeclareMathOperator{\J}{J}
\DeclareMathOperator{\M}{M}
\DeclareMathOperator{\Spec}{Spec}
\DeclareMathOperator{\Supp}{Supp}
\renewcommand{\P}{\operatorname{P}}
\newcommand{\tpars}{(\!(t)\!)}
\newcommand{\discrep}[1]{\operatorname{discrep}\left(#1\right)}
\newcommand{\lra}{\longrightarrow}
\newcommand{\supth}[1]{\ensuremath{#1^{\mathrm{th}}}}
\title{Quotient singularities by permutation actions are canonical}
\author{Takehiko Yasuda}
\address{Department of Mathematics, Graduate School of Science, the University
of Osaka, Toyonaka, Osaka 560-0043, Japan}
\address{Kavli Institute for the Physics and Mathematics of the Universe, The University of Tokyo, 5-1-5 Kashiwanoha, Kashiwa, Chiba, 277-8583, Japan}
\email{yasuda.takehiko.sci@osaka-u.ac.jp}
\begin{document}



\maketitle

\begin{prelims}

\DisplayAbstractInEnglish

\bigskip

\DisplayKeyWords

\medskip

\DisplayMSCclass

\end{prelims}


\newpage

\setcounter{tocdepth}{1}

\tableofcontents


\section{Introduction}

In birational geometry, there are four important classes of singularities.
In order from the smallest class are terminal singularities, canonical
singularities, log terminal singularities and log canonical singularities.
It is well known that quotient singularities are log terminal in characteristic
zero. Moreover, we have a representation-theoretic criterion for quotient
singularities being even canonical or terminal due to Reid, Shepherd-Barron
and Tai; see \cite[Section (4.11)]{reid1987youngpersons}.
These are not generally
true in positive characteristic. It is interesting to ask the following. 

\begin{problem}
\label{prob:rep-th}Does there exist a representation-theoretic criterion
for quotient singularities belonging to these classes of singularities?
\end{problem}

Let $V$ be an $n$-dimensional affine space $\AA_{k}^{n}$ over a
field $k$ of characteristic $p\ge0$. Suppose that a finite group
$G$ acts on $V$ linearly and effectively, and let $X$ denote the
quotient variety $V/G$ and $\pi$ denote the quotient map $V\to X$.
There exists a unique effective $\QQ$-divisor $B$ on $X$ such that
$\pi^{*}(K_{X}+B)=K_{V}$. The aim of this paper is to prove the following
two theorems, giving a partial answer to the above problem. 

\begin{thm}
\label{thm:main-non-log}Suppose that $G$ acts on $V$ by permutations
of coordinates. Then, the quotient variety $X$ has only canonical
singularities. 
\end{thm}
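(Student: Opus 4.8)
The plan is to compute discrepancies directly through the finite quotient map $\pi\colon V\to X$, exploiting that $V$ is smooth. I would begin with two reductions. Since $G\subset S_{n}\subset\GL_{n}$ with $\det(\sigma)=\mathrm{sgn}(\sigma)\in\{\pm1\}$, we have $\det^{2}\equiv 1$, so $\omega_{V}^{\otimes2}$ carries the trivial $G$-linearization and $2K_{X}$ is Cartier (indeed $K_{X}$ itself is Cartier when $\characteristic k=2$); in particular $K_{X}$ is $\QQ$-Cartier and discrepancies are defined. Next, canonicity is local, and the stabilizer of any point of $V$ is the intersection of $G$ with a Young subgroup determined by the blocks of equal coordinates, acting on the tangent space again by permutation of coordinates; so it suffices to bound the discrepancies of divisors lying over the image of the origin, where the entire group acts linearly.

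The heart is the ramification formula. Writing $K_{V}=\pi^{*}K_{X}+\mathfrak{d}$ for the effective different of $\pi$, a divisorial valuation $\bar{E}$ over $X$ lifts to a valuation $E$ over $V$ with ramification index $e$, and for the log discrepancies one has $A_{V}(E)=e\,A_{X}(\bar{E})-\ord_{E}(\mathfrak{d})$, equivalently $A_{X}(\bar{E})=\bigl(A_{V}(E)+\ord_{E}\mathfrak{d}\bigr)/e$. Canonicity amounts to $A_{X}(\bar{E})\ge 1$, i.e. $A_{V}(E)+\ord_{E}\mathfrak{d}\ge e$, for every $\bar{E}$. Codimension-one divisors $\bar{E}$ lie in the smooth locus and are harmless, so the transpositions in $G$, which ramify only in codimension one, play no role; the content is carried by exceptional divisors whose center has codimension $\ge2$, for which $A_{V}(E)\ge2$ (since the minimal log discrepancy of a smooth point is its codimension). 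In the tame case $p\nmid|G|$ one has $\mathfrak{d}=\sum(e_{D}-1)D$ and the required inequality is exactly the classical Reid--Tai estimate, which here is immediate: every element $\sigma$ contributes $\age(\sigma)=\tfrac12\codim(V^{\sigma})$, and for a permutation $\codim(V^{\sigma})=n-(\#\text{cycles of }\sigma)\ge2$ unless $\sigma$ is the identity or a transposition. Thus the tame case reduces to this trivial cycle-type computation.

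The main obstacle is therefore the wild case $p\mid|G|$, where $\mathfrak{d}$ acquires higher ramification terms and the age no longer computes discrepancies. The guiding observation is that wild ramification only enlarges the different: the exponent $\ord_{E}\mathfrak{d}$ strictly exceeds its tame value $e_{D}-1$, so for a fixed divisor the inequality $A_{V}(E)+\ord_{E}\mathfrak{d}\ge e$ only becomes slacker, and the wild contribution can only help $X$ remain canonical. Making this rigorous in arbitrary characteristic is the crux, and it splits into (i) justifying that it suffices to test a computable family of (monomial, or more generally $G$-cover-theoretic) valuations, and (ii) bounding $\ord_{E}\mathfrak{d}$ from below in terms of the cycle type. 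For (ii) I expect the decisive input to be the structural fact that a permutation module is a direct sum of modules $\mathrm{Ind}_{H}^{G}\mathbf{1}$ induced from point-stabilizers: this pins down the inertia groups, which are generated by permutations fixing the corresponding diagonals, and thereby controls the ramification filtration and the different uniformly in $p$. I would organize the wild estimate around the arc-space description of discrepancies, where the permutation structure makes the relevant weight function explicit; verifying that this weight stays $\ge1$ on every non-transposition stratum, with equality precisely for cycle type $(2,2,\dots)$ in characteristic two, is the delicate point, and it simultaneously accounts for the failure of Kawamata log terminality there for the associated log pair.
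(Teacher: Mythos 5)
Your preliminary reductions (the $2$-Gorenstein statement, the étale-local passage to point stabilizers, the tame computation $\age(\sigma)=\tfrac12\codim V^{\sigma}$) are fine, but the proof collapses at its declared crux: the guiding claim that wild ramification ``only enlarges the different \dots\ and can only help $X$ remain canonical'' has the sign backwards. The identity $A_{V}(E)=e\,A_{X}(\bar{E})-\ord_{E}(\mathfrak{d})$ you invoke is the \emph{tame} formula. In general, take models $g\colon W\to V$ and $f\colon Y\to X$ extracting $E$ and $\bar{E}$, with induced finite map $h\colon W\to Y$; comparing canonical divisors gives
\[
A_{X}(\bar{E})=\frac{1}{e}\Bigl(A_{V}(E)+\ord_{E}\bigl(g^{*}\mathfrak{d}\bigr)-w_{E}\Bigr),
\qquad
w_{E}:=\ord_{E}\bigl(\mathfrak{d}_{W/Y}\bigr)-(e-1)\;\ge\;0,
\]
where $\mathfrak{d}_{W/Y}$ is the different of $h$ and $w_{E}$ is its wild part along $E$. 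There are two differents in play: the one of $V\to X$ (the one you wrote down, entering with a plus sign) and the one of $W\to Y$ along $E$ itself, which enters with a \emph{minus} sign and is nonzero exactly when $E$ is wildly ramified over $\bar{E}$ --- which happens even when $V\to X$ is étale in codimension one. So wild ramification pushes $A_{X}(\bar{E})$ down, not up, and there is no a priori upper bound on $w_{E}$. This is why wild quotient singularities are generically \emph{worse} than tame ones: the paper's introduction cites Yamamoto's example of $(\ZZ/3\ZZ)^{2}$ acting linearly on $\AA_{k}^{3}$ in characteristic $3$ without pseudo-reflections whose quotient is not even log canonical, whereas any characteristic-zero quotient without pseudo-reflections is Kawamata log terminal; your principle, if true, would make such an example impossible. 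Even inside the permutation setting it fails: for $S_{2}$ acting on $\AA_{k}^{2}$ in characteristic $2$ the boundary coefficient jumps from $1/2$ to $1$ (Lemma \ref{lem:coef-boundary}), so $(X,B)$ degrades from klt to merely lc, exactly the characteristic-$2$ anomaly in Theorem \ref{thm:main-log}.

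Once the sign is corrected, everything you defer to ``the delicate point'' becomes the entire theorem, and your outline supplies no mechanism for it. What is needed is a quantitative bound showing the conductor of wild ramification always outweighs the freedom it creates; in the arc/cover-theoretic framework you gesture at, wildly ramified $G$-covers of the formal disk move in \emph{positive-dimensional} families, so the correct test is not ``weight $\ge 1$ on each stratum'' but ``weight $\ge\dim(\text{stratum})+1$'' (Proposition \ref{prop:canonical-criterion}); verifying weight $\ge 1$ is insufficient. The paper's proof consists precisely of these dimension-versus-conductor estimates: a Krasner-type formula for $\dim\Delta_{n,d}^{\circ}$ (Theorem \ref{thm:dimensions-loci}, Corollary \ref{cor:loci-dim-bound}), which is \emph{not} enough in characteristics $2$ and $3$ and must be sharpened using the absence of transpositions via Artin--Schreier theory and ramification jumps (Lemmas \ref{lem:indep-trans}--\ref{lem:dim-cyc-cubic}, Proposition \ref{prop:key-prop}); and finally a separate reduction for groups containing transpositions, using that $B$ is $2$-Cartier (resp.\ Cartier) together with Theorem \ref{thm:main-log} --- your remark that transpositions ``play no role'' skips this step too, since exceptional divisors centered inside the branch locus genuinely need it. None of these inputs appears in, or follows from, your sketch, and its organizing heuristic is false.
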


\begin{thm}
\label{thm:main-log}Suppose that $G$ acts on $V$ by permutations
of coordinates. Then, the log pair $(X,B)$ is log canonical. Moreover,
it is Kawamata log terminal if $p\ne2$. 
\end{thm}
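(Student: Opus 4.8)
The plan is to extract both assertions from the finite quotient map $\pi\colon V\to X$ together with the relative canonical divisor, reducing everything to a single estimate on wild ramification. First I would pin down $B$. The pseudo-reflections in $G\subseteq\fS_{n}$ are exactly the transpositions $(i\,j)\in G$ (an element fixes a hyperplane iff it has $n-1$ cycles), with fixed hyperplane $H_{ij}=\{x_{i}=x_{j}\}$ and inertia $\langle(i\,j)\rangle\cong\ZZ/2$. Computing the Jacobian of $\pi$ at the generic point of $H_{ij}$ gives the coefficient of $B$ along the image of $H_{ij}$: it is $1/2$ when $p\neq2$ (a tame reflection), and $1$ when $p=2$, where $(i\,j)$ is unipotent and the residue field extension is purely inseparable, so that the ramification index drops to $1$ while the different exponent stays $1$. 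Hence every coefficient of $B$ is $\le1$, and is $<1$ precisely when $p\neq2$, settling the coefficient condition in the definitions. Structurally, the subgroup $W=\langle\text{transpositions of }G\rangle$ is normal and is a Young subgroup, so $V/W\cong\AA_{k}^{n}$ is smooth, $\overline{G}=G/W$ acts on it without pseudo-reflections (so $V/W\to X$ is étale in codimension one), and $B$ is the descent of the branch divisor of $V\to V/W$.

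Next I would establish a Riemann--Hurwitz formula for log discrepancies valid in the wild case. Fix a prime divisor $E$ over $X$ on a model $\mu\colon X'\to X$, and let $E_{V}$ be a prime divisor over $E$ on the normalization $\tilde V$ of $V\times_{X}X'$, with ramification index $e=e(E_{V}/E)$ and different exponent $\mathfrak d=\mathfrak d(E_{V}/E)$ of $\cO_{\tilde V,E_{V}}/\cO_{X',E}$. Computing $K_{\tilde V}-(\pi\circ\rho)^{*}K_{X}$ in two ways---through $X'$ via the different of $\tilde V\to X'$ and the discrepancies of $X'/X$, and through $V$ via $K_{V}=\pi^{*}(K_{X}+B)$ and the smoothness of $V$---and comparing coefficients along $E_{V}$ yields
\[
a_{\log}(E;X,B)=\frac{a_{\log}(E_{V};V)-\delta}{e},\qquad \delta:=\mathfrak d-(e-1)\ge0 .
\]
Here $\delta$ is the wild part of the different, and $a_{\log}(E_{V};V)\ge1$ since $V$ is smooth (indeed $\ge2$ when $E$ is exceptional over $X$, because then $E_{V}$ is $\rho$-exceptional). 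Thus $(X,B)$ is log canonical iff $a_{\log}(E_{V};V)\ge\delta$ for every $E_{V}$, and Kawamata log terminal iff the inequality is always strict.

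It remains to prove this valuative estimate and to locate its equality cases. Divisors $E$ not lying over $\Supp B$ are immediate: there $E_{V}$ is unramified in codimension one over $E$, so $\delta=0$; moreover such $E$ are covered by Theorem~\ref{thm:main-non-log}, which gives $a_{\log}(E;X,B)=a_{\log}(E;X,0)>0$. For the remaining divisors I would localize at the generic point of $E_{V}$ and use that its inertia group $I\subseteq G$ is again a permutation group: decomposing $\{1,\dots,n\}$ into $I$-orbits splits $V$ into the corresponding coordinate subspaces, so that the different, and (for the product valuations computing the relevant infima) the log discrepancy, become additive over orbits. This reduces the estimate to a single transitive block, i.e.\ to a wildly ramified cyclic extension arising from one cycle of $G$; the key case is a $p$-cycle $\ZZ/p$ in its regular representation, where higher-ramification theory for $\ZZ/p$-extensions of $k\tpars$ bounds $\mathfrak d$ by the ramification break, which in turn is dominated by the weight of the valuation on $\AA_{k}^{p}$.

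The main obstacle is precisely this per-block wild different estimate: controlling the higher ramification filtration (equivalently the Artin conductor) of the cyclic covers produced by cycles in $G$ and showing its contribution never exceeds the log discrepancy on the smooth space $V$. I expect strict inequality for every wildly ramified cyclic block of odd residue characteristic---consistent with a genuine $p$-cycle contributing no boundary, so that canonicity already forces klt there---and degeneration to equality in exactly one situation: a wildly ramified $\ZZ/2$, i.e.\ a transposition in characteristic $2$, whose unique equality case $a_{\log}(E_{V};V)=\delta=1$ is realized by the non-exceptional divisor $H_{ij}$ itself and reproduces the boundary coefficient $1$. This simultaneously yields log canonicity in every characteristic and the failure of klt exactly when $p=2$.
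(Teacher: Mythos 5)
Your starting moves are sound: the computation of the coefficients of $B$ agrees with Lemma \ref{lem:coef-boundary}, and your wild Riemann--Hurwitz formula is correct---with $\delta=\mathfrak{d}-(e-1)$ one indeed has $a_{\log}(E;X,B)=\bigl(a_{\log}(E_{V};V)-\delta\bigr)/e$, so Theorem \ref{thm:main-log} is equivalent to the valuative estimate $a_{\log}(E_{V};V)\ge\delta$ for every divisor $E_{V}$ over $V$, with strict inequality when $p\ne2$. This is a genuinely different framing from the paper, which never argues divisor by divisor but instead combines the motivic criterion (Proposition \ref{prop:klt-lc}, resting on \cite{yasuda2024motivic}) with dimension bounds for strata of $\Delta_{n}$ (Corollary \ref{cor:loci-dim-bound}). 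The gap is that you never prove the valuative estimate, and both devices you offer for it break down.

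First, the claim that $\delta=0$ whenever $f(E)\not\subset\Supp(B)$ is false: wild ramification concentrated in codimension $\ge2$ on $X$ reappears along exceptional divisors whose centers lie in that locus. The paper's introduction cites a stark illustration of why this mechanism cannot work: for $(\ZZ/3\ZZ)^{2}$ acting linearly on $\AA_{k}^{3}$ in characteristic $3$ without pseudo-reflections \cite{yamamoto2021pathological}, one has $B=0$, yet the quotient is not log canonical; by your own formula this forces $\delta>a_{\log}(E_{V};V)\ge2$ at some exceptional divisor even though $\Supp(B)=\emptyset$. Nothing in your argument for this step uses the permutation structure, and for permutation actions the claim is equally unjustified---were it true, the transposition-free case of both main theorems would follow from a two-line tame argument, making the paper's entire wild analysis unnecessary. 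Moreover, you dispose of exactly these divisors by citing Theorem \ref{thm:main-non-log}, which is circular relative to the paper's logic (there, Theorem \ref{thm:main-non-log} is deduced from Theorem \ref{thm:main-log} when $G$ contains transpositions), and which you do not prove even in the transposition-free form you need. Second, the per-block estimate meant to cover the remaining divisors is not established: the inertia group of a divisorial valuation need not decompose into cyclic groups generated by cycles (a transitive inertia block can be $(\ZZ/2\ZZ)^{2}$, $S_{3}$, etc.); the asserted additivity of $\delta$ and of $a_{\log}$ over inertia orbits holds only for monomial-type valuations, and you give no argument that such valuations compute the relevant infimum; and, most fundamentally, the residue field of $\cO_{\tilde{V},E_{V}}$ is the function field of $E_{V}$, which is imperfect, so the higher-ramification theory of $\ZZ/p$-extensions of $k\tpars$ you invoke does not bound $\mathfrak{d}$ there. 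Relating differents at imperfect-residue-field valuations to discrepancies is precisely the technical core of the wild McKay correspondence (\cite{yasuda2019discrepancies}, \cite{yasuda2024motivic}) that the paper uses as a black box; your phrase that the ramification break is ``dominated by the weight of the valuation'' asserts exactly the needed conclusion without proof.
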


Note that if $p=0$ and if $G$ has no pseudo-reflections, then Theorem~\ref{thm:main-non-log} is a direct consequence of the Reid--Shepherd-Barron--Tai
criterion. Hochster and Huneke \cite[Section~6, p.~77]{hochster1992infinite}
observed that in positive characteristic, the invariant ring associated
to a permutation representation is always F-pure, and hence log canonical
from a result of Hara and Watanabe \cite{hara2002fregular}. Since
``canonical'' implies ``log canonical,'' Theorem~\ref{thm:main-non-log}
strengthens the last fact.

\begin{rem}
Hashimoto and Singh \cite[Remark 4.2]{hashimoto2023frobenius} noted
that Hochster and Huneke's observation also holds for more general
monomial representations. This may suggest that the above theorems
could be extended to monomial representations by replacing ``canonical''
in Theorem~\ref{thm:main-non-log} with ``log terminal.'' Note that
a similar approach to Theorem~\ref{thm:main-non-log} via the implication
``strongly F-regular'' $\Rightarrow$ ``log terminal'' does not
work. Firstly, ``canonical'' is stronger than ``log terminal.''
Secondly, a quotient variety $X/G$ is never strongly F-regular, provided
that $p$ divides the order of $G$ and $G$ contains no pseudo-reflections
(see \cite[Corollary~2]{broer2005thedirect} and \cite[Corollary 3.3]{yasuda2012puresubrings}).
\end{rem}

There are also a few cases  in positive characteristic where the theorems
are known. Fan \cite{fan2023crepant} constructed a crepant
resolution of the quotient variety $\AA_{k}^{4}/A_{4}$ associated
to the standard permutation action of the alternating group $A_{4}$
on $\AA_{k}^{4}$ in characteristic 2. In particular, this quotient
variety has only canonical singularities. If the symmetric group $S_{n}$
acts on $\AA_{k}^{2n}=(\AA_{k}^{2})^{n}$ by permutations of the $n$
components, then the quotient variety $\AA_{k}^{2n}/S_{n}$ is nothing
but the $\supth{n}$ symmetric product of $\AA_{k}^{2}$ and admits a crepant
resolution, the Hilbert scheme $\Hilb_{n}(\AA_{k}^{2})$ of $n$ points
on $\AA_{k}^{2}$; see \cite{beauville1983varietes,brion2005frobenius,kumar2001frobenius}.
In particular, $\AA_{k}^{2n}/S_{n}$ has only canonical singularities.
As a consequence, if a finite group $G$ acts on $\AA_{k}^{2n}$ via
a homomorphism $G\to S_{n}$, then $\AA_{k}^{2n}/G$ has only canonical
singularities.

The main ingredient of the proofs of the theorems is the wild McKay
correspondence proved in \cite{yasuda2024motivic}. It is formulated
as the following equality in a certain version of the Grothendieck
ring of varieties:
\begin{equation}
\M_{\st}(X,B)=\int_{\Delta_{G}}\LL^{n-\bv}.\label{eq:wild-McKay-1}
\end{equation}
The left-hand side is the stringy motive of the log pair $(X,B)$.
It is defined as the volume of the arc space of~$X$ with respect
to the so-called Gorenstein motivic measure with respect to the log
canonical divisor $K_{X}+B$. When the pair admits a log resolution,
the stringy motive is explicitly determined in terms of the resolution
data. In the right-hand side of the above equality, the domain $\Delta_{G}$
of the integral is a kind of moduli space of $G$-torsors over the
punctured formal disk $\Spec k\tpars$. Roughly speaking, it is the
union of countably many $k$-varieties, and if $k$ is algebraically
closed, then its $k$-points correspond to $G$-torsors over the punctured
formal disk $\Spec k\tpars$. The symbol $\LL$ means the class of
the affine line in the version of the Grothendieck ring of varieties 
that we consider and $\bv$ is a function $\Delta_{G}\to\frac{1}{\#G}\ZZ_{\ge0}$
associated to the representation $G\curvearrowright V$. For example,
when the representation is a permutation representation, which is
of our main interest in this paper, the function $\bv$ is determined
by discriminant exponents of \'etale $k\tpars$-algebras thanks to a
result in \cite{wood2015massformulas}, where $k\tpars$ denotes the
field of Laurent power series (see Section~\ref{subsec:Dimensions-of-loci}
for more details). There exists a decomposition of $\Delta_{G}$ into
countably many constructible subsets $C_{j}$, $j\in J$, such that
for each $j$, the restriction $\bv|_{C_{j}}$ of $\bv$ is constant.
The integral of the right-hand side of (\ref{eq:wild-McKay-1}) is
then defined as the countable sum $\sum_{j\in J}\{C_{j}\}\LL^{n-\bv(C_{j})}$.
If the action $G\curvearrowright V$ does not have a pseudo-reflection,
then the following variant also holds: If $X_{\sing}$ denotes the
singular locus of $X$ and $o\in\Delta_{G}$ denotes the point corresponding
to the trivial $G$-torsor, then we have
\[
\M_{\st}(X)_{X_{\sing}}=\{X_{\sing}\}+\int_{\Delta_{G}\setminus\{o\}}\LL^{n-\bv}.
\]
Here $\M_{\st}(X)_{X_{\sing}}$ is the stringy motive of $X$ along
$X_{\sing}$ and is defined as the volume of the space of arcs passing
through $X_{\sing}$. Since this invariant contains information about
the minimal discrepancy of $X$, evaluating the integral on the right-hand
side of the last equality can provide information about singularities
of $X$.

We conclude this introduction by mentioning known results related
to Problem~\ref{prob:rep-th} which have not been mentioned above.
When $p>0$ and $G$ is a cyclic $p$-group, there are representation-theoretic
criterions as desired in the problem above; see \cite{yasuda2014thepcyclic,yasuda2019discrepancies,tanno2021thewild,tanno2022onconvergence}.
Chen, Du and Gao \cite{chen2020modular} showed that 
in characteristic 3 a quotient variety by the cyclic group of order 6 containing pseudo-reflections
admits a crepant resolution and hence has only canonical singularities.
Yamamoto \cite{yamamoto2021crepant} constructed crepant resolutions
of quotient varieties $\AA_{k}^{3}/G$ in characteristic 3 for some
class of groups $G$ without pseudo-reflections, including the case
$G=S_{3}$, which shows again that the quotient variety in question
has only canonical singularities. He also showed that if $G=(\ZZ/3\ZZ)^{2}$
acts on $\AA_{k}^{3}$ without pseudo-reflections in characteristic
3, the quotient variety $\AA_{k}^{3}/G$ is not log canonical, see  \cite{yamamoto2021pathological},
while for any proper subgroup $H\subsetneq G$, the quotient variety $\AA_{k}^{3}/H$ has
only canonical singularities. This shows that the problem cannot be
reduced to the case of cyclic groups as in the case of characteristic
zero. 

The outline of the paper is as follows. In Section~\ref{sec:Preliminaries},
we collect known results which are necessary to prove our main results.
In particular, we define basic classes of singularities, then recall
basic facts on moduli spaces of $G$-torsors over $\Spec k\tpars$
as well as the wild McKay correspondence. In Section~\ref{sec:Permutation-actions},
we give key dimension estimates on loci in moduli spaces. In Section
\ref{sec:Proof of main theorems}, we prove the main theorems. 

Throughout the paper, we work over a base field $k$ of characteristic
$p\ge0$. By a variety, we mean a separated integral scheme of finite
type over $k$. 

\subsection*{Acknowledgments}
I would like to thank Ratko Darda, Linghu Fan and Mitsuyasu Hashimoto
for inspiring conversations and helpful comments.

\section{Preliminaries\label{sec:Preliminaries}}

\subsection{Singularities in the minimal model program}

Let $(X,B)$ be a log pair. Namely, $X$ is a normal variety over
$k$ and $B$ is a $\QQ$-divisor on $X$ such that $K_{X}+B$ is
$\QQ$-Cartier. For a proper birational morphism $f\colon Y\to X$
with $Y$ normal and a prime divisor $E$ on $Y$, the discrepancy
of $(X,B)$ at $E$, denoted by $\discrep{E;X,B}$, is defined to be
the multiplicity of $K_{Y}-f^{*}(K_{X}+B)$. 

\begin{defn}
We say that $(X,B)$ is \emph{Kawamata log terminal} (resp.~\emph{log
canonical}\,) if $\discrep{E;X,B}>-1$ (resp.~$\discrep{E;X,B}\ge-1$)
for every proper birational morphism $f\colon Y\to X$ and every prime
divisor $E$ on~$Y$. 
\end{defn}

We identify a $\QQ$-Gorenstein normal variety $X$ with the log pair
$(X,0)$. 

\begin{defn}
We say that a $\QQ$-Gorenstein normal variety $X$ has only \emph{terminal
singularities }(resp.~\emph{canonical} \emph{singularities}) if $\discrep{E;X}:=\discrep{E;X,0}>0$
(resp.~$\discrep{E;X}\ge0$) for every proper birational morphism
$f\colon Y\to X$ with $Y$ a normal variety and every prime divisor
$E$ on $Y$.
\end{defn}

\subsection{Stringy motives\label{subsec:Stringy-motives}}

The Grothendieck ring of varieties, denoted by $K_{0}(\Var_{k})$,
is defined as the quotient of the free abelian group generated by
the isomorphism classes of $k$-varieties by the so-called scissor
relation. It has a natural structure of commutative ring. We denote
the class of a $k$-variety $Z$ in this ring or in its variants by
$\{Z\}$. The class $\{\AA_{k}^{1}\}$ of the affine line $\AA_{k}^{1}$
plays a special role and is denoted by $\LL$. We need the version
of the Grothendieck ring of varieties that was denoted by $\widehat{\cM'_{k,r}}$
in \cite{yasuda2024motivic}. Here $r$ denotes a positive integer
which is factorial enough so that all the $\QQ$-Cartier divisors
that we will consider are $r$-Cartier and finite groups that we will
consider have orders dividing $r$. To obtain this ring, we modify
$K_{0}(\Var_{k})$ by adjoining the fractional power $\LL^{1/r}$
of $\LL$ and its inverse, taking a quotient by imposing an extra
relation among elements, and taking the completion with respect to
some filtration. In particular, the ring $\widehat{\cM'_{k,r}}$ contains
fractional powers $\LL^{n/r}$, $n\in\ZZ$, of $\LL$. Since the ring
is also complete with respect to a certain topology, we can consider
some type of infinite sums in it and discuss their convergence and
divergence. 

Let $(X,B)$ be a log pair. We can define a motivic measure on the arc
space $\J_{\infty}X$ of $X$ in terms of the log canonical divisor
$K_{X}+B$, which is often called the Gorenstein (or $\QQ$-Gorenstein)
measure. The \emph{stringy motive} $\M_{\st}(X,B)$ is defined to be
the volume of the entire arc space $\J_{\infty}X$ with respect to this
measure. It is expressed as a countable sum
$\sum_{i}\{Z_{i}\}\LL^{m_{i}}$, where the $Z_{i}$ are $k$-varieties
and the $m_{i}$ are elements of $\frac{1}{r}\ZZ$.  This sum converges
if and only if for each $s\in\RR$, there are at most finitely many
indices $i$ such that $\dim Z_{i}+m_{i}\ge s$.  If this is the case,
$\M_{\st}(X,B)$ is an element of the ring $\widehat{\cM'_{k,r}}$;
otherwise, we formally put it to be $\infty$. In either case, we define
the\emph{ dimension} of $\M_{\st}(X,B)$ as
\[
\dim\M_{\st}(X,B):=\sup_{i}(\dim Z_{i}+m_{i})\in\frac{1}{r}\ZZ\cup\{\infty\}.
\]
When $B=0$, we write $\M_{\st}(X,0)$ simply as $\M_{\st}(X)$. The
\emph{stringy motive of $X$ along the singular locus} $X_{\sing}$,
denoted by $\M_{\st}(X)_{X_{\sing}}$, is defined to be the volume
of the space of those arcs on $X$ that pass through $X_{\sing}$.
Its dimension is similarly defined.

When $X$ is smooth, we have $\M_{\st}(X)=\{X\}$. In the general
case, $\M_{\st}(X,B)$ can be regarded as a modification of $\{X\}$
obtained by incorporating singularities of the pair $(X,B)$. Suppose
that there exists a log resolution $f\colon Y\to X$ of the pair and
write 
\[
K_{Y}-f^{*}(K_{X}+B)=\sum_{i\in I}a_{i}E_{i},
\]
where the $E_{i}$ are prime divisors on $Y$ and the $a_{i}$ are rational
numbers. Then, we can express $\M_{\st}(X,B)$ by the following formula: 
\[
\M_{\st}(X,B)=\begin{cases}
\sum_{I'\subset I}\{E_{I'}^{\circ}\}\prod_{i\in I'}\frac{\LL-1}{\LL^{a_{i}+1}-1} & (\forall i,\,a_{i}>-1),\\
\infty & (\text{otherwise}).
\end{cases}
\]
In particular, assuming the existence of a log resolution, we have
that $\M_{\st}(X,B)\ne\infty$ if and only if $(X,B)$ is Kawamata
log terminal. Note however that stringy motives are defined whether
the pair admits a log resolution or not. When $B=0$, we have the
following equality: 
\[
\inf_{(f,E);\,f(E)\subset X}\discrep{E;X}=d-1-\dim\M_{\st}(X)_{X_{\sing}}\in\frac{1}{r}\ZZ\cup\{-\infty\}.
\]
Here the pair $(f,E)$ runs over the pairs of a proper birational
morphism $f\colon Y\to X$ with $Y$ a normal variety and 
a prime divisor $E$ on $Y$ such that $f(E)\subset X_{\sing}$. In particular,
$X$ has only canonical singularities if and only if 
\[
\dim\M_{\st}(X)_{X_{\sing}}\le d-1.
\]
For more details about the relation between stringy motives and discrepancies,
we refer the reader to \cite[Section 2]{yasuda2019discrepancies},
\cite[Section 16]{yasuda2024motivic} and \cite[Section 6.6]{yasuda2021motivic}.

\subsection{P-Moduli spaces of formal torsors}

In \cite{tonini2023moduliof}, the authors developed the theory of
P-schemes and P-moduli spaces in order to construct moduli spaces
of torsors over the punctured formal disk $\Spec k\tpars$. A \emph{P-morphism}
$f\colon Y\to X$ of schemes over $k$ is a collection of compatible
maps $f(L)\colon Y(L)\to X(L)$ for algebraically closed fields $L/k$
such that there exist a surjective morphism $Z\to Y$ locally of finite
presentation (a sur covering) and a morphism $Z\to X$ such that for
each algebraically closed field $L/K$, the following diagram of the
induced maps is commutative:
\[
\xymatrix{Z(L)\ar[d]\ar[dr]\\
Y(L)\ar[r] & X(L)\rlap{.}
}
\]
The \emph{category of P-schemes} \emph{over} $k$ has schemes over
$k$ as its objects and P-morphisms as its morphisms. An important
property of this category is the following: When $X$ and $Y$ are
locally of finite type and separated over $k$,  a P-morphism
$f\colon Y\to X$ is an isomorphism in the category of P-schemes if
and only if for every algebraically closed field $L/k$, $f(L)$ is
bijective; see \cite[Lemma 4.32]{tonini2023moduliof}. Since an ordinary
morphism of $k$-schemes induces a P-morphism in the obvious way,
there exists a natural functor from the category of schemes over $k$
to the one of P-schemes over $k$: 
\begin{align*}
\P\colon\{\text{scheme over \ensuremath{k}}\} & \lra\{\text{P-scheme over \ensuremath{k}}\}\\
X & \longmapsto X^{\P}.
\end{align*}

Fixing a finite group $G$, let us consider the functor 
\begin{align*}
F_{G}\colon\{\text{affine scheme over \ensuremath{k}}\} & \lra\{\text{Set}\}\\
\Spec R & \longmapsto\{G\text{-torsor over \ensuremath{\Spec R\tpars}}\}/{\cong}.
\end{align*}
Here $R\tpars$ denotes the ring of Laurent power series with coefficients
in $R$. This functor has a strong P-moduli space which is of the
form $(\coprod_{i\in I}W_{i})^{\P}$, where $I$ is a countable set
and the  $W_{i}$ are $k$-varieties. Roughly speaking, this means that
for each $G$-torsor over $\Spec R\tpars$, we have the induced P-morphism
$\Spec R\to\coprod_{i\in I}W_{i}$, and for each algebraically closed
field $L/k$, the induced map
\[
\{G\text{-torsor over \ensuremath{\Spec L\tpars}}\}/{\cong}\lra\left(\coprod_{i\in I}W_{i}\right)(L)
\]
is bijective. See \cite[Definition 4.16]{tonini2023moduliof} for
the precise meaning. We denote this P-moduli space by $\Delta_{G}$.
The P-scheme $(\coprod_{i\in I}W_{i})^{\P}$ is unique up to unique
isomorphism; if $\coprod_{j\in J}V_{j}$ is another scheme satisfying
the same property, then there exist a scheme $Z$ and morphisms $Z\to\coprod_{i\in I}W_{i}$
and $Z\to\coprod_{j\in J}V_{j}$ which are of finite type and geometrically
bijective; see \cite[Corollary 4.33]{tonini2023moduliof}. In what follows,
we choose one scheme-model $\coprod_{i\in I}W_{i}$ of the P-scheme
$\Delta_{G}$ and identify $\Delta_{G}$ with it. 

\subsection{The wild McKay correspondence}

Let $V=\AA_{k}^{n}$ and let $G$ be a finite group which acts on
$\AA_{k}^{n}$ linearly and effectively. Let $X=V/G$ be the associated
quotient variety. There exists a unique effective $\QQ$-divisor on $B$
which has support along the branch divisor of the quotient map $V\to
X$ and satisfies $\pi^{*}(K_{V/G}+B)=K_{V}$. Note that $B=0$ if and
only if $X\to V$ is \'etale in codimension one, which holds if and
only if $G$ contains no pseudo-reflection. (An element $g\in G$ is
called a \emph{pseudo-reflection }if the fixed-point locus $V^{G}$ has
codimension one in $V$.)

Let $\Delta_{G}=\coprod_{i\in I}W_{i}$ be as above. We call a subset
$S\subset\Delta_{G}$ \emph{constructible} if $S=\bigcup_{j\in J}S_{j}$
for a finite subset $J\subset I$ and constructible subsets $S_{j}\subset W_{j}$,
$j\in J$. To the given linear action of $G$ on $V$, we can associate
a function $\bv\colon\Delta_{G}\to\frac{1}{\#G}\ZZ_{\ge0}$, for example,
following \cite[Definition 3.3]{wood2015massformulas} or \cite[Definition 9.1]{tonini2023moduliof}.
This function has the following properties:
\begin{enumerate}
\item If $o$ denotes the point of $\Delta_{G}$ corresponding to the trivial
$G$-torsor, then $\bv(o)=0$. The function $\bv$ takes positive
values on $\Delta_{G}\setminus\{o\}$. 
\item There exist countably many constructible subsets $C_{j}$, $j\in J$, 
of $\Delta_{G}\setminus\{o\}$ such that $\Delta_{G}\setminus\{o\}=\bigsqcup_{j\in J}C_{j}$
and $\bv|_{C_{j}}$ is constant for each $j$.
\end{enumerate}
With this notation, we can define the following motivic integrals:
\begin{align*}
\int_{\Delta_{G}}\LL^{-\bv} & :=1+\sum_{j\in J}\{C_{j}\}\LL^{-\bv(C_{j})},\\
\int_{\Delta_{G}\setminus\{o\}}\LL^{-\bv} & :=\sum_{j\in J}\{C_{j}\}\LL^{-\bv(C_{j})}.
\end{align*}
If the infinite sum on the right-hand side converges in the sense
explained in Section~\ref{subsec:Stringy-motives}, then the integrals
$\int_{\Delta_{G}}\LL^{-\bv}$ and $\int_{\Delta_{G}\setminus\{o\}}\LL^{-\bv}$
are defined as elements of the ring $\widehat{\cM'_{k,r}}$ in \cite{yasuda2024motivic}.
Otherwise, we put 
\[
\int_{\Delta_{G}}\LL^{-\bv}=\int_{\Delta_{G}\setminus\{o\}}\LL^{-\bv}=\infty.
\]
Whether this condition holds or not, the dimensions of these integrals
are well defined. 

{\samepage
\begin{thm}[\textit{cf.} {\cite[Corollary 1.4 and the proof of Corollary 16.4]{yasuda2024motivic}}]\leavevmode
\begin{enumerate}
\item We have 
\[
\M_{\st}(X,B)=\int_{\Delta_{G}}\LL^{n-\bv}.
\]
\item If $G$ has no pseudo-reflection, then 
\[
\M_{\st}(X)_{X_{\sing}}=\{X_{\sing}\}+\int_{\Delta_{G}\setminus\{o\}}\LL^{n-\bv}.
\]
\end{enumerate}
\end{thm}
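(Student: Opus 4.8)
The plan is to derive both equalities from motivic integration over spaces of arcs, via the change of variables formula for the quotient map $\pi\colon V\to X$ combined with the \emph{untwisting} of arcs by $G$-torsors. The starting point is the definition of the stringy motivic invariant $\M_{\st}(X,B)$ as a motivic integral over the arc space $\J_{\infty}X$, with the integrand recording the order of vanishing along the relative log canonical divisor, so that it measures the log discrepancy of $(X,B)$. Since $V=\AA_{k}^{n}$ is smooth, every sufficiently general arc on $X$ lifts, after base change along a suitable ramified cover of the formal disk $\Spec L\tbrats$, to a $G$-equivariant arc on $V$; the cover one needs is precisely a $G$-torsor over the punctured disk $\Spec L\tpars$. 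This is the mechanism by which the moduli space $\Delta_{G}$ enters the computation.

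Concretely, I would replace arcs on $X$ by \emph{twisted arcs} on the quotient stack $[V/G]$: a twisted arc is a representable morphism from a stacky disk, determined by a $G$-cover of the punctured disk, into $[V/G]$, equivalently a $G$-equivariant morphism from the total space of the torsor into $V$. Assigning to each twisted arc the isomorphism class of its underlying torsor produces a map from the space of twisted arcs to $\Delta_{G}(L)$, and stratifying by this class is the key reorganization of the integral. Over a fixed torsor, classified by a point of $\Delta_{G}$, the equivariant arcs into $V$ form the $G$-invariant part of the module of arcs valued in the torsor; this is an affine space, the \emph{tuning module}, whose dimension is governed by the ramification data of the torsor.

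The change of variables formula then converts the stringy integrand into the weight $\LL^{n-\bv}$: the term $\LL^{n}$ comes from the linear action of $G$ on $V$, while the correction $\bv$ is exactly the order of the Jacobian $\ord\Jac$ of the untwisting, i.e.\ the shift number generalizing the classical age. Integrating first over the tuning modules (the fibers) and then over $\Delta_{G}$ yields
\[
\M_{\st}(X,B)=\int_{\Delta_{G}}\LL^{n-\bv},
\]
which is (1). For (2) I would use that $B=0$ when $G$ has no pseudo-reflection, so $\M_{\st}(X)=\M_{\st}(X,0)$, and then localize the entire computation over the singular locus $X_{\sing}$, the image under $\pi$ of the points of $V$ with nontrivial stabilizer. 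The trivial torsor $o$ corresponds to the untwisted arcs, those lifting to honest arcs on $V$; after localization these contribute the base class $\{X_{\sing}\}$, while every nontrivial torsor forces the arc into the branch locus and contributes through $\int_{\Delta_{G}\setminus\{o\}}\LL^{n-\bv}$, giving the claimed formula.

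The main obstacle is the change of variables formula in the wild case. In arbitrary characteristic the moduli space $\Delta_{G}$ is genuinely positive-dimensional and is most naturally handled as a P-scheme, so one cannot simply sum over a finite set of conjugacy classes as in the tame McKay correspondence. The technical core, carried out in \cite{yasuda2024motivic}, is to make the twisted-arc/untwisting formalism precise in this generality, to identify the weight $\bv$ with the Jacobian order of the untwisting, and to control measurability, the decomposition of $\Delta_{G}\setminus\{o\}$ into constructible pieces on which $\bv$ is constant, and the convergence of the resulting series in the completed Grothendieck ring $\widehat{\cM'_{k,r}}$.
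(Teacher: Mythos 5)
The paper itself gives no proof of this theorem: it is imported verbatim from \cite{yasuda2024motivic} (Corollary 1.4 and the proof of Corollary 16.4), which is precisely the source whose argument your outline summarizes — twisted arcs on $[V/G]$, stratification of them by the class of the underlying torsor in $\Delta_{G}$, tuning modules as fibers, and the change-of-variables identification of $\bv$ with the Jacobian order of untwisting. Your sketch is a faithful description of that reference's strategy, so the approaches coincide, with the caveat that (like the paper) you defer the genuinely hard content — the wild change-of-variables formula, measurability of the strata, and convergence in $\widehat{\cM'_{k,r}}$ — to the same citation rather than supplying it.
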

}

\begin{prop}[\textit{cf.} {\cite[Corollary  1.4]{yasuda2024motivic}}]
\label{prop:canonical-criterion}Suppose that $G$ contains no pseudo-reflection.
Fix a decomposition $\Delta_{G}\setminus\{o\}=\bigsqcup_{j\in J}C_{j}$
as above. If $\dim C_{j}-\bv(C_{j})\le-1$ for every $j\in J$, then
$X$ has only canonical singularities.
\end{prop}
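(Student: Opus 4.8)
The plan is to read off the singularity type of $X$ from the \emph{dimension} of the localized stringy invariant $\M_{\st}(X)_{X_{\sing}}$, which the wild McKay correspondence expresses through the given motivic integral. Since $G$ contains no pseudo-reflection, the second part of the wild McKay correspondence applies and gives
\[
\M_{\st}(X)_{X_{\sing}}=\{X_{\sing}\}+\int_{\Delta_{G}\setminus\{o\}}\LL^{n-\bv}=\{X_{\sing}\}+\sum_{j\in J}\{C_{j}\}\LL^{n-\bv(C_{j})}.
\]
First I would bound the dimension of the right-hand side. As $X$ is normal of dimension $n$, its singular locus has codimension at least two, so $\dim\{X_{\sing}\}\le n-2$; and each summand contributes dimension $\dim C_{j}+n-\bv(C_{j})=n+(\dim C_{j}-\bv(C_{j}))\le n-1$ by hypothesis. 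Taking the maximum over all contributions, $\dim\M_{\st}(X)_{X_{\sing}}\le n-1$.

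The heart of the argument is the equivalence
\[
\dim\M_{\st}(X)_{X_{\sing}}\le n-1\iff X\text{ has only canonical singularities,}
\]
which I would motivate from the behaviour of the stringy invariant under a proper birational morphism. In the tame, characteristic-zero picture, fix a log resolution $f\colon Y\to X$ that is an isomorphism over $X_{\sm}$, with exceptional primes $E_{1},\dots,E_{m}$ and discrepancies $a_{i}=\discrep{E_{i};X}$; the change-of-variables formula then writes
\[
\M_{\st}(X)_{X_{\sing}}=\sum_{\emptyset\ne I\subseteq\{1,\dots,m\}}\{E_{I}^{\circ}\}\prod_{i\in I}\frac{\LL-1}{\LL^{a_{i}+1}-1},
\]
where $E_{I}^{\circ}$ is the open stratum along which exactly the $E_{i}$, $i\in I$, meet. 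The $I$-th term has dimension $(n-|I|)-\sum_{i\in I}a_{i}=n-\sum_{i\in I}(a_{i}+1)$. If every $a_{i}\ge0$ then this is at most $n-|I|\le n-1$, whereas if some $a_{i}<0$ the singleton $I=\{i\}$ already yields dimension $n-(a_{i}+1)>n-1$. Since canonicity can be tested on the divisors of a single log resolution, this proves the equivalence, and together with the dimension bound above it shows that $X$ has only canonical singularities.

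I expect the main obstacle to be making this second step rigorous in arbitrary characteristic, where log resolutions need not exist and the sum defining $\M_{\st}(X)_{X_{\sing}}$ may diverge in $\widehat{\cM'_{k,r}}$. I would therefore work with the intrinsic, arc-theoretic stringy invariant underlying the wild McKay correspondence of \cite{yasuda2024motivic}, and replace the resolution computation by the corresponding statement of that theory — that $X$ is canonical precisely when $\dim\M_{\st}(X)_{X_{\sing}}\le n-1$. Two further points need care: that the dimension of the motivic integral is read off term by term even when it fails to converge (as the excerpt guarantees), and that the possibly fractional values of $\bv$ and of the $a_{i}$, living in the ring with $\LL^{1/\#G}$ adjoined, do not disturb the comparison of dimensions across the threshold $n-1$.
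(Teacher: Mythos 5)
Your proposal is correct and is essentially the paper's own argument: the paper's proof simply cites \cite[Theorem 1.2]{yasuda2019discrepancies}, which states that $X$ is canonical if and only if $n-1-\max\bigl\{\dim X_{\sing},\,\max_{j}(n+\dim C_{j}-\bv(C_{j}))\bigr\}\ge0$ --- exactly the equivalence ``$\dim\M_{\st}(X)_{X_{\sing}}\le n-1$ iff canonical'' (with the dimension read off termwise from the defining sum, so it is meaningful even without convergence) that you identify as the heart of the matter and defer to the wild McKay literature --- and then notes that this inequality amounts to $\dim C_{j}-\bv(C_{j})\le-1$ for all $j$, since $\dim X_{\sing}\le n-1$ holds automatically. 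Your characteristic-zero log-resolution computation serves only as motivation, and your deferral of the positive-characteristic criterion to the cited theory is precisely what the paper itself does, so there is no gap relative to the paper's own level of rigor.
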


\begin{proof}
From \cite[Theorem 1.2]{yasuda2019discrepancies}, $X$ has only canonical
singularities if and only if 
\[
n-1-\max\left\{ \dim X_{\sing},\max_{j}\left(n+\dim C_{j}-\bv\left(C_{j}\right)\right)\right\} \ge0.
\]
This inequality is equivalent to that for every $j$, we have $\dim C_{j}-\bv(C_{j})\le-1$.
\end{proof}

\begin{prop}
\label{prop:klt-lc}We keep the notation as above.
\begin{enumerate}
\item\label{p:klt-lc-1} If 
\[
\lim_{j\in J}\dim C_{j}-\bv(C_{j})=-\infty,
\]
then the pair $(X,B)$ is Kawamata log terminal. 
\item\label{p:klt-lc-2} If\, $\sup\{\dim C_{j}-\bv(C_{j})\mid j\in J\}<+\infty$, then the pair $(X,B)$ is log canonical.
\end{enumerate}
\end{prop}

\begin{proof}
Note that when $G$ has no pseudo-reflections,
assertion~\eqref{p:klt-lc-1} is the same as \cite[Corollary
  1.4(2)]{yasuda2024motivic}. Let~$Y$ be a normal variety, and let
$Y\to X$ be a proper birational morphism.  Let $B'$ be the
$\QQ$-divisor on~$Y$ such that $(Y,B')$ is crepant over $(X,B)$. From
\cite[Theorem 16.2]{yasuda2024motivic},
$\M_{\st}(Y,B')=\M_{\st}(X,B)$.  If $(X,B)$ is not Kawamata log
terminal (resp.~log canonical), then for some proper birational
morphism $Y\to X$, the divisor~$B'$ has multiplicity at most $-1$ at some
prime divisor $E$. Then the standard computation of stringy invariants
(in a neighborhood of a general point of $E$) shows that
$\M_{\st}(Y,B')$ does not converge. Translating these conditions to
ones on $\dim C_{j}-\bv(C_{j})$ shows assertion~\eqref{p:klt-lc-1}. If
$(X,B)$ was not log canonical, a similar reasoning shows that the 
dimensions of terms in an infinite sum defining
$\M_{\st}(Y,B')=\M_{\st}(X,B)=\int_{\Delta_{G}}\LL^{n-\bv}$ are not
bounded above, which shows assertion~\eqref{p:klt-lc-2}.
\end{proof}

\section{Permutation actions and dimension estimates\label{sec:Permutation-actions}}

From now on, suppose that $G$ is a subgroup of $S_{n}$ and acts
on $V=\AA_{k}^{n}$ by the induced permutation action. Let $X=V/G$
be the quotient variety with quotient map $\pi\colon V\to X$, and
let $B$ be the effective $\QQ$-divisor on~$X$ such that $\pi^{*}(K_{X}+B)=K_{V}$. 

\subsection{The Gorenstein index of $\boldsymbol{X}$}

\begin{lem}
\label{lem:1-2-Gor}The quotient variety $X$ is $2$-Gorenstein; 
that is, $2K_{X}$ is Cartier. Moreover, if $p=2$, then $X$ is $1$-Gorenstein,
that is, $K_{X}$ is Cartier.
\end{lem}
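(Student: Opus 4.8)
The plan is to represent the relevant powers of $K_{X}$ by invariant differential forms and to reduce the Cartier property to the principality of the branch divisor. The group $G$ acts on the canonical sheaf $\omega_{V}=\cO_{V}\cdot\omega$, where $\omega=dx_{1}\wedge\cdots\wedge dx_{n}$, through the sign character $\operatorname{sgn}\colon G\to\{\pm1\}\subset k^{\times}$, since a permutation scales $\omega$ by its sign. Put $m=1$ if $p=2$ and $m=2$ otherwise, so that $\operatorname{sgn}^{m}\equiv1$ and $\omega^{\otimes m}$ is $G$-invariant. Then $\omega^{\otimes m}$ descends to a rational $m$-canonical form $\eta$ on $X$ with $\pi^{*}\eta=\omega^{\otimes m}$, and since $\omega$ is nowhere vanishing we have $\operatorname{ord}_{E}(\pi^{*}\eta)=0$ along every prime divisor $E$ of $V$. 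First I would record the standard fact that a rational section of the rank-one reflexive sheaf $\omega_{X}^{[m]}$ yields an isomorphism $\omega_{X}^{[m]}\cong\cO_{X}(\operatorname{div}_{X}\eta)$, so that $mK_{X}$ is Cartier as soon as $\operatorname{div}_{X}\eta$ is.

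Next I would compute $\operatorname{div}_{X}\eta$. At a prime divisor $D\subset X$ with a prime divisor $E\subset V$ above it, ramification index $e_{D}$ and different exponent $d_{D}$, the behaviour of top forms under pullback gives $\operatorname{ord}_{E}(\pi^{*}\eta)=e_{D}\operatorname{ord}_{D}(\eta)+m\,d_{D}$; combined with $\operatorname{ord}_{E}(\pi^{*}\eta)=0$ and $\operatorname{ord}_{D}(B)=d_{D}/e_{D}$ this yields $\operatorname{div}_{X}\eta=-mB$, hence $\omega_{X}^{[m]}\cong\cO_{X}(-mB)$. Thus the lemma becomes the assertion that $mB$ is a Cartier divisor. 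Since $B$ is supported on the branch locus, whose codimension-one part comes exactly from the transpositions in $G$ (the only $g$ with $\codim V^{g}=1$), it suffices to analyze a single transposition. The inertia at the generic point of a component of $\{x_{i}=x_{j}\}$ is $\langle(i\,j)\rangle\cong\ZZ/2$, and a two-variable computation shows the cover is tame with $e_{D}=2$, $d_{D}=1$ when $p\neq2$, giving $\operatorname{ord}_{D}(B)=1/2$, while for $p=2$ it is wildly (inseparably) ramified with $e_{D}=1$, $d_{D}=1$, giving $\operatorname{ord}_{D}(B)=1$. In either case $mB$ equals the reduced branch divisor $\sum_{D}D$.

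Finally I would exhibit this reduced branch divisor as principal. Consider $\delta=\prod_{\{i,j\}\colon(i\,j)\in G}(x_{i}-x_{j})$. As $G$ permutes, up to sign, the factors indexed by the transpositions it contains, $\delta$ is a relative invariant for a sign character of $G$; hence $\delta^{2}$ is $G$-invariant, and when $p=2$ already $\delta$ itself is $G$-invariant. Viewing these as regular functions on $X$ and computing orders along each branch component with the ramification data above gives $\operatorname{div}_{X}(\delta^{2})=\sum_{D}D=2B$ when $p\neq2$, and $\operatorname{div}_{X}(\delta)=\sum_{D}D=B$ when $p=2$; in both cases $mB$ is principal, so $mK_{X}\sim0$ is Cartier. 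For $p=2$ this gives that $K_{X}$ is Cartier, and for $p\neq2$ that $2K_{X}$ is Cartier, which is the lemma. The step I expect to be most delicate is the characteristic-two ramification analysis: there the cover is wild and the residue field extension along the branch divisor is purely inseparable, so that $e_{D}=1$ despite the inertia group having order two, and one must check carefully that the different exponent is exactly $1$ and that $\delta$ cuts out each branch component with multiplicity exactly one. Verifying that no spurious components or higher multiplicities enter $\operatorname{div}_{X}(\delta)$ — equivalently, that the generic inertia along each branch component is precisely a single transposition — is the crux.
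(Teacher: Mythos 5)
Your proposal is correct and is essentially the paper's own argument in a divisor-theoretic repackaging: with $\delta=\prod_{(i,j)\in G,\,i>j}(x_{i}-x_{j})$, the paper directly checks that $(\delta\cdot dx_{1}\wedge\cdots\wedge dx_{n})^{\otimes m}$ ($m=2$, resp.\ $m=1$ when $p=2$) is $G$-invariant and descends to a global generator of $\omega_{X}^{[m]}$, which is precisely the product of your two steps $\omega_{X}^{[m]}\cong\cO_{X}(-mB)$ and $mB=\operatorname{div}_{X}(\delta^{m})$. The local ingredient is likewise identical — tame ramification with $e_{D}=2$ for $p\neq2$ and the wild $e_{D}=1$, $d_{D}=1$ computation for $p=2$ — except that the paper carries out the reduction to a single transposition at a general closed point of the branch divisor (stabilizer plus formal completion) rather than at the generic point via inertia groups as you do.
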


\begin{proof}
We first consider the case $p\ne2$. Let $x_{1},\dots,x_{n}$ be coordinates
of the affine space $V$. Let $R$ be the reduced divisor on $V$
defined by 
\begin{equation}
\prod_{\substack{i>j\\
(i,j)\in G
}
}(x_{i}-x_{j})=0.\label{eq:diff-prod}
\end{equation}
Here $(i,j)$ denotes a transposition. We easily see that the ramification
divisor of $\pi\colon V\to X$ has support $R$. The ramification
index of $\pi$ along every irreducible component of $R$ is 2. In
particular, $\pi$ is tamely ramified at general points of $R$. It
follows that $(\pi^{*}\omega_{X})^{**}\cong\omega_{V}(-R)$ (for example,
see \cite[Section 2.41]{kollar2013singularities}). The sheaf $\omega_{V}(-R)$
is generated by the global section
\begin{equation}
\eta:=\prod_{\substack{i>j\\
(i,j)\in G
}
}(x_{i}-x_{j})\cdot dx_{1}\wedge\cdots\wedge dx_{n}.\label{eq:eta}
\end{equation}
For $\sigma\in G$, we have
\begin{align*}
\sigma(\eta) & =\prod_{\substack{i>j\\
(i,j)\in G
}
}(x_{\sigma(i)}-x_{\sigma(j)})\cdot dx_{\sigma(1)}\wedge\cdots\wedge dx_{\sigma(n)}\\
 & =\pm\prod_{\substack{i>j\\
(i,j)\in G
}
}(x_{\sigma(i)}-x_{\sigma(j)})\cdot dx_{1}\wedge\cdots\wedge dx_{n}.
\end{align*}
Since $(\sigma(i),\sigma(j))=\sigma\circ(i,j)\circ\sigma^{-1}$, the
transposition $(\sigma(i),\sigma(j))$ runs over all transpositions
in $G$ when $(i,j)$ does so. Therefore, $\sigma(\eta)=\pm\eta$.
It follows that $\eta^{\otimes2}\in(\omega_{V}(-R))^{\otimes2}$ is
invariant under the $G$-action and gives a global generator of $\omega_{X}^{[2]}=(\omega_{X}^{\otimes2})^{**}$.
This shows $X$ is 2-Gorenstein.

We next consider the case $p=2$. The ramification divisor of
the quotient map $\pi\colon V\to X$ is again the reduced divisor
$R$ defined by the same equation as in the case $p\ne2$. To understand
the ramification of~$\pi$ along $R$, we first consider the case
$G=\{1_{G},(1,2)\}$. The coordinate ring of $X$ is then 
\[
k[x_{1}+x_{2},x_{1}x_{2},x_{3},\dots,x_{n}]\subset k[x_{1},x_{2},x_{3},\dots,x_{n}].
\]
In particular, $X\cong\AA_{k}^{n}$. Pulling back the generator 
\[
d(x_{1}+x_{2})\wedge d(x_{1}x_{2})\wedge dx_{3}\wedge\cdots\wedge dx_{n}
\]
of $\omega_{X}$ by $\pi$, we get the $n$-form
\begin{align*}
 & (dx_{1}+dx_{2})\wedge(x_{2}dx_{1}+x_{1}dx_{2})\wedge dx_{3}\wedge\cdots\wedge dx_{n}\\
 & =(x_{1}+x_{2})dx_{1}\wedge dx_{2}\wedge dx_{3}\wedge\cdots\wedge dx_{n}.
\end{align*}
This is a generator of $\omega_{V}(-R)$, where $R$ is defined by
the function $x_{1}+x_{2}$ in the current situation. Thus, we get
$\pi^{*}\omega_{X}=\omega_{V}(-R)$. Let us return to the case of
a general group $G$ and take a general $\overline{k}$-point $b=(b_{1},\dots,b_{n})$
of $R$ with $b_{i}=b_{j}$ for some $(i,j)$. Here $\overline{k}$
denotes an algebraic closure of $k$. The stabilizer group of this
point is generated by the transposition $(i,j)$, which interchanges
$x_{i}-b_{i}$ and $x_{j}-b_{j}$ for the local coordinates $x_{1}-b_{1},\dots,x_{n}-b_{n}$.
This shows that the morphism 
\[
\Spec\widehat{\cO_{V_{\overline{k}},b}}\lra\Spec\widehat{\cO_{X_{\overline{k}},\overline{b}}}
\]
between the formal neighborhoods of $b$ and of its image $\overline{b}$
on $X$ is isomorphic to the one in the case where $G=\{1_{G},(1,2)\}$
and $b$ is the origin. Thus, the ramification of $\pi\colon V\to X$
looks like the above special case, and we still have $(\pi^{*}\omega_{X})^{**}=\omega_{V}(-R)$
in the general case. The $n$-form $\eta$ given by Formula (\ref{eq:eta})
is again a global generator of $\omega_{V}(-R)$. Since we are working
in characteristic 2, the same computation as in the case $p\ne2$
shows 
\[
\sigma(\eta)=\pm\eta=\eta.
\]
Thus, $\eta$ is $G$-invariant and gives a global generator of $\omega_{X}$,
which shows that $X$ is 1-Gorenstein. 
\end{proof}

\begin{lem}
\label{lem:coef-boundary}If $p\ne2$, then the multiplicity of\, $B$
at every irreducible component of its support is $1/2$. If $p=2$,
the multiplicity of\, $B$ at every irreducible component of its support
is $1$.
\end{lem}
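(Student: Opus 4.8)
The plan is to compute the multiplicity $b_D$ of $B$ along each prime divisor $D \subset \Supp B$ by passing to the generic point of $D$ and reducing to the model $G = \{1_G, (1,2)\}$ of Lemma \ref{lem:1-2-Gor}. The irreducible components of $R$ are the hyperplanes $\{x_i = x_j\}$ indexed by the transpositions $(i,j) \in G$, and the components of $\Supp B$ are their images $D_{ij} := \pi(\{x_i = x_j\})$. At a general point of $\{x_i = x_j\}$ all coordinates are distinct except for $x_i = x_j$, so its stabilizer in $G$ is exactly $\{1_G, (i,j)\}$; as in the proof of Lemma \ref{lem:1-2-Gor}, the induced morphism on formal neighborhoods is therefore isomorphic to the quotient by this order-two group swapping two coordinates. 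Since $b_D$ is a codimension-one invariant, computed at the generic point $\eta_D$, it suffices to evaluate it in this model.

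In the model, Lemma \ref{lem:1-2-Gor} already gives $(\pi^*\omega_X)^{**} \cong \omega_V(-R)$ with $R$ reduced, in both characteristics; equivalently $\pi^* K_X = K_V - R$, so the defining relation $\pi^*(K_X + B) = K_V$ yields $\pi^* B = R$. Writing $B = b_D D$ and $\pi^* D = e_{12}\,\{x_1 = x_2\}$ for the pullback multiplicity, and comparing with $R = \{x_1 = x_2\}$, this reduces the lemma to the single computation of $e_{12}$ via $b_D = 1/e_{12}$. With $X = \Spec k[x_1 + x_2, x_1 x_2, x_3, \dots, x_n]$, the reduced branch divisor $D$ is the discriminant locus of $t^2 - (x_1 + x_2)t + x_1 x_2$. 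For $p \ne 2$ the discriminant $(x_1 + x_2)^2 - 4 x_1 x_2 = (x_1 - x_2)^2$ is reduced on $X$ and pulls back to $(x_1 - x_2)^2$, vanishing to order $2$ along $\{x_1 = x_2\}$; hence $e_{12} = 2$ and $b_D = 1/2$. For $p = 2$ the discriminant equals $(x_1 + x_2)^2$, so $D$ has reduced local equation $x_1 + x_2 = x_1 - x_2$, which pulls back to a function vanishing to first order along $\{x_1 = x_2\}$; hence $e_{12} = 1$ and $b_D = 1$.

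The one subtle point, which I expect to be the crux, is the value of $e_{12}$ in characteristic two. There the cover is wildly ramified, and the ``index two'' of the $\ZZ/2$-quotient is carried not by a pullback multiplicity but by a purely inseparable residue-field extension of degree two at $\eta_D$; this is exactly why the reduced branch divisor is cut out only to first order, forcing $e_{12} = 1$ and $b_D = 1$ rather than the tame value $1/2$. Once this local computation is secured, the global statement is immediate: distinct transpositions yield distinct components of $R$ and of $\Supp B$, and the argument along each is identical.
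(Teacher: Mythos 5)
Your proof is correct and takes essentially the same route as the paper: reduction to the model $G=\{1_G,(1,2)\}$ via the stabilizer and formal-neighborhood argument, the identity $(\pi^*\omega_X)^{**}\cong\omega_V(-R)$ established in Lemma \ref{lem:1-2-Gor}, and the explicit computation in the coordinates $x_1+x_2,\,x_1x_2$ showing the reduced branch divisor pulls back to $2R$ (resp.\ $R$) when $p\ne2$ (resp.\ $p=2$). The only cosmetic difference is that for $p\ne2$ the paper simply quotes the standard tame-ramification value $1-1/e=1/2$ rather than rederiving it from $\pi^*B=R$ as you do, and your closing observation about the fierce (residually inseparable, $e=1$) ramification in characteristic $2$ correctly identifies why the two characteristics give different answers.
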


\begin{proof}
If $p\ne2$, then $\pi\colon V\to X$ is tamely ramified along general
points of the ramification divisor $R$ with ramification index 2.
As is well known, the multiplicity of the branch divisor at a prime
divisor is then given by $1/2$. Suppose that $p=2$. As in the proof
of Lemma~\ref{lem:1-2-Gor}, we may reduce to the case $G=\{1_{G},(1,2)\}$.
Let $B'=\pi(R)=\Supp(B)$. The prime divisor $B'$ is defined by the
ideal $x_{1}+x_{2}$ of the coordinate ring
\[
k[x_{1}+x_{2},x_{1}x_{2},x_{3},\dots,x_{n}].
\]
Thus, $\pi^{*}B'=R$. In the proof of Lemma~\ref{lem:1-2-Gor}, we
showed $\pi^{*}\omega_{X}=\omega_{V}(-R)$. It follows that $\pi^{*}(\omega_{X}(B'))=\omega_{V}$
and hence $B=B'$, which shows the lemma in the case $p=2$.
\end{proof}

\begin{cor}
\label{cor:Cartier}The $\QQ$-divisor $B$ is 2-Cartier. Moreover,
it is Cartier if $p=2$.
\end{cor}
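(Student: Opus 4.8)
The goal is to prove Corollary \ref{cor:Cartier}: that $B$ is $2$-Cartier, and Cartier when $p=2$. The plan is to deduce this directly from the two preceding lemmas, which have already done the substantive work. Lemma \ref{lem:1-2-Gor} establishes the Gorenstein index of $X$ (namely $2K_X$ is Cartier, and $K_X$ is Cartier when $p=2$), while Lemma \ref{lem:coef-boundary} pins down the coefficients of $B$ along each component of its support (namely $1/2$ when $p\ne 2$, and $1$ when $p=2$). The key relation tying everything together is the defining identity $\pi^*(K_X+B)=K_V$. Since $K_V$ is Cartier (indeed $V=\AA_k^n$ is smooth), this identity transfers Cartier-type information between $K_X$ and $B$.

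First I would treat the case $p\ne 2$. Here $2K_X$ is Cartier by Lemma \ref{lem:1-2-Gor}. From the relation $\pi^*(K_X+B)=K_V$, one obtains that $K_X+B$ is $\QQ$-Cartier, and multiplying through by $2$ gives $2(K_X+B)=2K_X+2B$. The divisor $2B$ has integer coefficients: by Lemma \ref{lem:coef-boundary}, each component of $B$ has coefficient $1/2$, so $2B$ is a reduced (integral) Weil divisor. To conclude that $2B$ is Cartier, the cleanest route is to observe that $B'=\Supp(B)$ is the image of the ramification divisor $R$, and on $X$ this branch divisor is actually a Cartier divisor near its general points; more robustly, I would argue that $2(K_X+B)$ is Cartier because its pullback identity forces it to be the Cartier divisor $\pi_*$-descended from the $G$-invariant generator $\eta^{\otimes 2}$ constructed in the proof of Lemma \ref{lem:1-2-Gor}. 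Combining that $2K_X$ is Cartier with the fact that $2(K_X+B)$ is Cartier yields that $2B=2(K_X+B)-2K_X$ is a difference of Cartier divisors, hence Cartier. This is exactly the assertion that $B$ is $2$-Cartier.

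For the case $p=2$, Lemma \ref{lem:1-2-Gor} gives that $K_X$ itself is Cartier, and Lemma \ref{lem:coef-boundary} gives that $B$ has coefficient $1$ along each component, so $B$ is already an integral Weil divisor. The defining relation becomes $\pi^*(K_X+B)=K_V$ with both $K_X$ and $K_V$ Cartier, so $K_X+B$ is Cartier; indeed, the proof of Lemma \ref{lem:coef-boundary} already exhibits $B=B'$ with $\pi^*(\omega_X(B'))=\omega_V$, showing directly that $\omega_X(B)$ is an invertible sheaf. Hence $B=(K_X+B)-K_X$ is a difference of Cartier divisors and is itself Cartier, which is the stronger claim in characteristic two.

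The only genuine subtlety, and the step I expect to require the most care, is justifying that $2(K_X+B)$ (resp.\ $K_X+B$ when $p=2$) is honestly Cartier rather than merely $\QQ$-Cartier --- that is, promoting the pullback relation $\pi^*(K_X+B)=K_V$ into descent of an actual line bundle. The cleanest way to handle this is to avoid reproving descent from scratch and instead reuse the explicit invariant generators already built in Lemma \ref{lem:1-2-Gor}: the $G$-invariant form $\eta^{\otimes 2}$ (resp.\ $\eta$ in characteristic two) trivializes the relevant reflexive power of $\omega_X$ twisted by $B$, and a reflexive sheaf with a nowhere-vanishing global section on a normal variety is invertible. Given that this machinery is already in place, the Corollary follows formally, so I would keep the proof to a few lines of bookkeeping on Cartier divisor classes rather than any new geometric input.
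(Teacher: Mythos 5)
Your overall strategy --- write $2B=2(K_X+B)-2K_X$, quote Lemma \ref{lem:1-2-Gor} for $2K_X$, and prove that $2(K_X+B)$ is Cartier by descending a $G$-invariant generator from $V$ --- is sound, and it is genuinely different from the paper's proof: the paper instead factors through the smooth quotient $V/S_n\cong\AA_k^n$, observes that $2(K_{V/S_n}+D)$ is an integral Weil divisor on a smooth variety and hence Cartier, and pulls back along $h\colon X\to V/S_n$ using $K_X+B=h^*(K_{V/S_n}+D)$. However, your justification of the crucial step fails as written. The section $\eta^{\otimes 2}$ does \emph{not} trivialize $\cO_X(2(K_X+B))$; it trivializes $\omega_X^{[2]}=\cO_X(2K_X)$, which is exactly the content of Lemma \ref{lem:1-2-Gor} and gives you nothing new. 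Indeed $\pi^*(2(K_X+B))=2K_V$, so the sheaf you need to trivialize pulls back to $\omega_V^{\otimes 2}$, whereas $\eta^{\otimes 2}$ is the generator of $(\omega_V(-R))^{\otimes 2}$: viewed as a section of $\omega_V^{\otimes 2}$ it vanishes along $2R$, so its descent is a section of $\cO_X(2(K_X+B))$ vanishing along $\Supp B$, not a generator. The correct section to descend is $(dx_1\wedge\cdots\wedge dx_n)^{\otimes 2}$, which is $G$-invariant (the sign of a permutation squares to $1$) and generates $\omega_V^{\otimes 2}$; since $\cO_X(2(K_X+B))=(\pi_*\omega_V^{\otimes 2})^G$ (using $\pi^*(K_X+B)=K_V$ and the fact that a divisor on $X$ is effective if and only if its pullback under the finite surjective $\pi$ is), this descent is a global generator and $2(K_X+B)$ is Cartier. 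The same fix works for $p=2$ with $dx_1\wedge\cdots\wedge dx_n$ itself, which is invariant because signs are trivial in characteristic $2$.

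There is a second genuine error in your characteristic-$2$ paragraph: from $\pi^*(\omega_X(B'))=\omega_V$ you conclude ``directly'' that $\omega_X(B)$ is invertible. That inference is false in general for finite quotients, because reflexive pullback can turn a non-invertible divisorial sheaf into an invertible one. For example, take $p\ne 2$, $X=\AA_k^2/\{\pm 1\}$ and $D$ the image of a line $L$ through the origin: $\cO_X(D)$ is not invertible at the singular point (the class of $D$ generates the class group $\ZZ/2\ZZ$), yet $(\pi^*\cO_X(D))^{**}=\cO_{\AA_k^2}(L)$ is invertible. So ``the pullback is a line bundle'' never suffices by itself; what makes descent work is exhibiting a $G$-\emph{invariant} generator upstairs, as above. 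Finally, your first suggested route --- that the branch divisor ``is actually a Cartier divisor near its general points'' --- is vacuous: every Weil divisor on a normal variety is Cartier near the generic points of its components, since those points are regular; the whole issue is what happens on the closed locus where $X$ is singular.
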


\begin{proof}
We first consider the case $p\ne2$. Consider the log pair $(V/S_{n},D)$
associated to the standard permutation action of $S_{n}$. As is well known,
the support of $D$ is defined by the discriminant polynomial, and
the coefficient of $D$ is $1/2$ from Lemma~\ref{lem:coef-boundary}.
Since $V/S_{n}$ is isomorphic to $\AA_{k}^{n}$, so, in particular, smooth,
$K_{V/S_{n}}+D$ is 2-Cartier. If $h\colon X\to V/S_{n}$ denotes
the natural map, then 
\[
K_{X}+B=h^{*}\left(K_{V/S_{n}}+D\right).
\]
 Since $K_{X}$ and $h^{*}(K_{V/S_{n}}+D)$ are both 2-Cartier (see Lemma
(\ref{lem:1-2-Gor})), $B$ is also 2-Cartier. 

We next consider the case $p=2$ and again consider the log pair $(V/S_{n},D)$
associated to the standard action of $S_{n}$ on $V$. In this case,
$D$ has coefficient 1 from Lemma~\ref{lem:coef-boundary}. Hence
$K_{V/S_{n}}+D$ is 1-Cartier, and so is 
\[
K_{X}+B=h^{*}\left(K_{V/S_{n}}+D\right).
\]
Since $K_{X}$ and $K_{V/S_{n}}+D$ are both Cartier (see Lemma~\ref{lem:1-2-Gor}),
$B$ is also Cartier. 
\end{proof}

\subsection{Dimensions of loci in moduli spaces\label{subsec:Dimensions-of-loci}}

The inclusion map $G\hookrightarrow S_{n}$ induces a map $\Delta_{G}\to\Delta_{S_{n}}$
which sends a $G$-torsor $P$ to the contracted product $P\wedge^{G}S_{n}$
(for example, see \cite[Proposition~2.2.2.12]{calm`es2015groupes} or \cite[Section 4.4, Item 5]{emsalem2017twisting}). (Strictly speaking, we have a canonical P-morphism $\Delta_{G}\to\Delta_{S_{n}}$.
To find a scheme-morphism inducing this P-morphism, we need to replace
the chosen scheme-model $\coprod_{j\in J}W_{j}$ of $\Delta_{G}$
with another model $\coprod_{i\in I}V_{i}$ given with a universal
bijection $\coprod_{i\in I}V_{i}\to\coprod_{j\in J}W_{j}$.) The map
$\Delta_{G}\to\Delta_{S_{n}}$ is quasi-finite, and the cardinality
of each fiber is at most $\#S_{n}/\#G$. Let $\Delta_{n}$ denote
the P-moduli space of degree $n$ finite \'etale covers of $\Spec k\tpars$.
We have an isomorphism $\Delta_{S_{n}}\to\Delta_{n}$, sending an
$S_{n}$-torsor $A\to\Spec k\tpars$ to $A/S_{n-1}\to\Spec k\tpars$,
where $S_{n-1}$ is identified with the stabilizer of $1\in\{1,2,\dots,n\}$
(see \cite[Proposition 2.7]{tonini2020essentially} and \cite[Definition 8.2]{tonini2023moduliof}).
We denote by $\psi_{G}$ the composite map 
\[
\psi_{G}\colon\Delta_{G}\lra\Delta_{S_{n}}\lra\Delta_{n}.
\]
Since this is quasi-finite, for every constructible subset $C\subset\Delta_{G}$,
we have $\dim C=\dim\psi_{G}(C)$. 

Let $\bd\colon\Delta_{n}\to\ZZ_{\ge0}$ be the discriminant exponent
function. From \cite[Lemma 2.6 and Theorem 4.8]{wood2015massformulas},
the $\bv$-function on $\Delta_{G}$ (associated to the permutation
action $G\curvearrowright\AA_{k}^{n}$ defined as above) factors as
follows:
\[
\bv\colon\Delta_{G}\xrightarrow{\psi_{G}}\Delta_{n}\xrightarrow{\bd/2}\frac{1}{2}\ZZ_{\ge0}.
\]
From \cite[Theorem 9.8]{tonini2023moduliof} (see also \cite[Lemma 14.3]{yasuda2024motivic}),
the function $\bv$ is locally constructible. Namely, there
exists a decomposition of $\Delta_{G}$ into countably many constructible
subsets $C_{i}$, $i\in I$, such that $\bv|_{C_{j}}$ is constant
for every $i$. Since $\bd/2$ is identical to the $\bv$-function associated to
the standard permutation action of~$S_{n}$ on $\AA_{k}^{n}$, the
function $\bd/2$ on $\Delta_{n}$, as well as $\bd$, is locally constructible.
This shows that for each $d\in\ZZ_{\ge0}$, 
\[
\Delta_{n,d}:=\bd^{-1}(d)
\]
is a locally constructible subset of $\Delta_{n}$. In fact, this
set is also quasi-compact and constructible; see \cite[Corollary 4.12]{yasuda2024motivic2}.

Let $\Delta_{n}^{\circ}\subset\Delta_{n}$ denote the subspace of
geometrically connected covers (see \cite[Definition 8.6]{tonini2023moduliof}),
which is a locally constructible subset of $\Delta_{n}$; see \cite[Lemma 8.7]{tonini2023moduliof}.
Let $\nu=(\nu_{1},\nu_{2},\dots,\nu_{l})$ be a partition of $n$
by positive integers; that is, the $\nu_{i}$ are positive integers satisfying
$n=\sum_{i=1}^{l}\nu_{i}$. We have the map
\[
\eta_{\nu}\colon\prod_{i=1}^{l}\Delta_{\nu_{i}}^{\circ}\lra\Delta_{n},\quad(A_{i})_{1\le i\le l}\longmapsto\coprod_{i=1}^{l}A_{i}.
\]
For $d\ge0$, let 
\[
\Delta_{n,d}^{\circ}:=\Delta_{n}^{\circ}\cap\Delta_{n,d}.
\]
If $\delta=(\delta_{1},\dots,\delta_{l})$ is a partition of $d$
by non-negative integers, then $\eta_{\nu}$ restricts to 
\[
\eta_{\nu,\delta}\colon\prod_{i}^{l}\Delta_{\nu_{i},\delta_{i}}^{\circ}\lra\Delta_{n,d}.
\]
This map is quasi-finite; in particular, the source of the map has
the same dimension as its image. Moreover, $\Delta_{n,d}$ is covered
by the images of the maps $\eta_{\nu,\delta}$ as $\nu$ and $\delta$
run over partitions as above. Thus, we can estimate the dimension
of $\Delta_{n,d}$ in terms of those  of the $\Delta_{\nu_{i},\delta_{i}}^{\circ}$.

The following result from \cite{yasuda2024motivic2} is the key in
the proofs of our main results. 

\begin{thm}[\textit{cf.} \cite{yasuda2024motivic2}]
\label{thm:dimensions-loci} We have
\[
\dim\Delta_{n,d}^{\circ}=\begin{cases}
0 & (p\nmid n,\,d=n-1),\\
-\infty & (p\nmid n,\,d\ne n-1),\\
\lceil(d-n+1)/p\rceil & (p\mid n,\,p\nmid(d-n+1),\,d-n+1\ge0),\\
-\infty & (p\mid n,\,p\nmid(d-n+1),\,d-n+1<0),\\
-\infty & (p\mid n,\,p\mid(d-n+1)).
\end{cases}
\]
Here we follow the convention that $\dim\emptyset=-\infty$ and that
if $p=0$, then $p\nmid n$. 
\end{thm}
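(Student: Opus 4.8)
The plan is to reduce to a concrete parametrization of wildly ramified extensions of $k\tpars$ by Eisenstein polynomials and to extract the dimension from the valuation of the different. For an algebraically closed field $L\supset k$, a point of $\Delta_{n}^{\circ}(L)$ is a connected degree-$n$ finite étale cover of $\Spec L\tpars$, that is, a degree-$n$ separable field extension $K/L\tpars$. Since the residue field $L$ is algebraically closed, $K/L\tpars$ is totally ramified with ramification index $e=n$, and the discriminant exponent equals the valuation of the different, $d=v_{K}(\mathfrak{d}_{K/L\tpars})$. When $p\nmid n$, total ramification forces tameness, so $K$ is the unique tamely ramified extension (adjoin an $n$-th root of $t$), with $d=n-1$; this gives the first two branches, with the convention that $p=0$ is treated as $p\nmid n$.

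So assume $p\mid n$. I would write $K=L\tpars[\pi]$ with $\pi$ a root of an Eisenstein polynomial $f=x^{n}+c_{n-1}x^{n-1}+\cdots+c_{0}$ (so $v(c_{i})\ge1$ and $v(c_{0})=1$, where $v$ is normalized by $v(t)=1$) and use $\mathfrak{d}_{K/L\tpars}=(f'(\pi))$, hence $d=v_{K}(f'(\pi))$. In characteristic $p$ with $p\mid n$ one has $f'(x)=\sum_{i:\,p\nmid i}i\,c_{i}x^{i-1}$, so $d$ is the valuation of a sum of terms of value $v_{K}(i\,c_{i}\pi^{i-1})=n\,v(c_{i})+(i-1)$ ranging over $i$ with $p\nmid i$. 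This already forces $d\ge n>n-1$ (so the stratum is empty for $d-n+1<0$ and for the tame value $d=n-1$), and, crucially, since $p\nmid i$ gives $i-1\not\equiv-1\equiv n-1\pmod p$, each candidate value satisfies $d\not\equiv n-1\pmod p$. Thus the locus where $v_{K}(f'(\pi))$ hits a value with $p\mid(d-n+1)$ is empty, which yields the last branch.

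It then remains to compute the dimension of the stratum $\{v_{K}(f'(\pi))=d\}$ in the space of Eisenstein polynomials modulo the pro-algebraic group of changes of uniformizer $\pi\mapsto u_{1}\pi+u_{2}\pi^{2}+\cdots$ with $u_{1}\ne0$. I would stratify by which term $n\,v(c_{i})+(i-1)=d$ is dominant, truncate the coefficients $c_{i}=\sum_{j}c_{i,j}t^{j}$ at the precision that determines $d$, and count the surviving free parameters after subtracting the dimension of the reparametrization action. The expectation is that only the coefficients up to the different bound survive and that the redundancy imposed by the Frobenius (i.e.\ $p$-th power) relations removes exactly the $c_{i,j}$ indexed by multiples of $p$, so that the net count equals $\lceil(d-n+1)/p\rceil$; the ceiling reflects that the admissible valuations $n\,v(c_{i})+(i-1)$ advance in steps commensurate with $p$.

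The main obstacle is precisely this last dimension bookkeeping in the wild, possibly non-Galois, case: one must control cancellation among the terms of $f'(\pi)$ (which can raise $v_{K}(f'(\pi))$ above the naive minimum), verify that such cancellation never produces a value in the excluded congruence class, and carry out the change-of-uniformizer quotient uniformly in $d$ when $n$ is not a prime power. This is where the higher ramification theory---the structure of the ramification filtration of the Galois closure together with a Hasse--Arf-type integrality---does the real work, and it is the step I expect to be most delicate.
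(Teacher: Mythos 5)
First, a point of comparison: this theorem is not proved in the paper at all --- it is imported from \cite{yasudamotivic}, and the remark following it describes it as a motivic version of Krasner's formula --- so there is no internal proof to measure your attempt against; your Eisenstein-polynomial strategy is the classical route behind Krasner-type counts and is plausibly the skeleton of the cited proof. The reductions you do carry out are correct: over an algebraically closed $L\supset k$, points of $\Delta_{n}^{\circ}$ are totally ramified separable degree-$n$ extensions of $L\tpars$; the two tame branches follow from the uniqueness of the tame totally ramified extension; and $d=v_{K}(f'(\pi))$ for an Eisenstein polynomial $f$. Moreover, the ``cancellation'' issue you flag as a main obstacle is in fact a non-issue: when $p\mid n$ the surviving terms $i\,c_{i}\pi^{i-1}$ (with $p\nmid i$, $1\le i\le n-1$) have valuations $n\,v(c_{i})+(i-1)$ that are pairwise incongruent modulo $n$, hence pairwise distinct, so $v_{K}(f'(\pi))$ equals the minimum of the term valuations exactly; together with the observation that separability forces $c_{i}\ne0$ for some $i$ with $p\nmid i$, this makes your congruence argument for the branches $d-n+1<0$ and $p\mid(d-n+1)$ rigorous.

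The genuine gap is the remaining branch, which is the substance of the theorem: you never derive $\dim\Delta_{n,d}^{\circ}=\lceil(d-n+1)/p\rceil$, but state it as an ``expectation'' whose mechanism (``Frobenius relations remove the coefficients $c_{i,j}$ with $p\mid j$'') is a guess extrapolated from the Artin--Schreier case $n=p$, where it does reproduce the known answer $j-\lfloor j/p\rfloor$. What is missing is precisely the work: (i) a truncation statement (Krasner's lemma in equal characteristic) showing the extension generated by $f$ depends only on the coefficients $c_{i,j}$ with $j$ below an explicit bound, so that the stratum is the image of a finite-dimensional variety of truncated Eisenstein polynomials; (ii) an exact computation of the generic fiber dimension of this parametrization, i.e.\ of the orbits of the truncated uniformizer-change group $\pi\mapsto u_{1}\pi+u_{2}\pi^{2}+\cdots$ --- this is where all the difficulty lies, since the naive parameter count includes all coefficients $c_{i}$ with $p\mid i$, which are invisible to the discriminant and vastly outnumber $\lceil(d-n+1)/p\rceil$; and (iii) a justification that this quotient computation yields the dimension of $\Delta_{n,d}^{\circ}$ as a locally constructible subset of the P-moduli space of \cite{tonini2023moduliof}, which is defined by a moduli property and not as such a quotient. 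Without (i)--(iii) neither the upper bound nor the lower bound on the dimension is established, so what you have is a correct proof of the degenerate branches plus a strategy outline, not a proof, for the case that the main results of this paper actually rest on.
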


\begin{rem}
Theorem~\ref{thm:dimensions-loci} can be regarded as a motivic version of a formula
by Krasner \cite{krasner1966nombredes,krasner1962nombredes}. See
\cite{yasuda2024motivic2} for more details. 
\end{rem}

\begin{cor}
\label{cor:loci-dim-bound}Let $d$ be a positive integer, and let
$n$ be an integer with $n\ge2$.
\begin{enumerate}
\item\label{c:ldb-1} If $n\ge4$, then 
\[
\dim\Delta_{n,d}^{\circ}-\frac{d}{2}\le-1.
\]
\item\label{c:ldb-2} If $n=3$, then
\[
\dim\Delta_{n,d}^{\circ}-\frac{d}{2}\le\begin{cases}
-1 & (p\ne3),\\
-{1}/{2} & (p=3).
\end{cases}
\]
\item\label{c:ldb-3} If $n=2$, then
\[
\dim\Delta_{n,d}^{\circ}-\frac{d}{2}=\begin{cases}
-1/2 & (p\ne2,\,d=1),\\
-\infty & (p\ne2,\,d>1),\\
0 & (p=2,\,d\text{ even}),\\
-\infty & (p=2,\,d\text{ odd}).
\end{cases}
\]
\item\label{c:ldb-4} We always have
\[
\dim\Delta_{n,d}^{\circ}-\frac{d}{2}\le0.
\]
\end{enumerate}
\end{cor}

\begin{proof}
If $\Delta_{n,d}^{\circ}$ is empty, then it has dimension $-\infty$
and the desired inequalities are obvious. In what follows, we only
consider the situation where $\Delta_{n,d}^{^{\circ}}$ is not empty. 

\eqref{c:ldb-1}~  If $p\nmid n$ and $d=n-1$, then 
\[
\dim\Delta_{n,d}^{\circ}-\frac{d}{2}=0-\frac{n-1}{2}\le-\frac{3}{2}\le-1.
\]
If $p\mid n$, $p\nmid(d-n+1)$ and $d-n+1\ge0$, then 
\[
\dim\Delta_{n,d}^{\circ}-\frac{d}{2}=\left\lceil \frac{d-n+1}{p}\right\rceil -\frac{d}{2}.
\]
Let $h(d)$ denote the right-hand side. If $d-n+1$ and $(d+1)-n+1$
are both coprime to $p$, then 
\[
h(d+1)=h(d)-\frac{1}{2}<h(d).
\]
If $(d+1)-n+1$ is divisible by $p$, then  
\[
h(d+2)=h(d)+1-1=h(d).
\]
Thus, $h(d)$ is a weakly decreasing function, as $d$ runs over integers
satisfying $p\nmid(d-n+1)$ and $d-n+1\ge0$, and attains the maximum
when $d-n+1=1$, equivalently when $d=n$. Thus, the maximum of $h(d)$
is
\[
\left\lceil \frac{d-n+1}{p}\right\rceil -\frac{d}{2}=1-\frac{n}{2}\le-1.
\]
Here the last inequality follows from the assumption $n\ge4$. 

\eqref{c:ldb-2}~  If $p\ne3$ and $d=2$, then 
\[
\dim\Delta_{n,d}^{\circ}-\frac{d}{2}=-1.
\]
Suppose $p=3$. If $3\nmid(d-2)$ and $d-2\ge0$, then
\begin{align*}
\dim\Delta_{3,d}^{\circ}-\frac{d}{2} & \le\left\lceil \frac{d-2}{3}\right\rceil -\frac{d}{2}\\
 & \le\frac{d-2}{3}+\frac{2}{3}-\frac{d}{2}\\
 & \le\frac{d}{3}-\frac{d}{2}\\
 & <0.
\end{align*}
Since $\dim\Delta_{3,d}^{\circ}-{d}/{2}$ is a half integer, it
is at most $-1/2$.

\eqref{c:ldb-3}~  The only nontrivial case is when $p=2$ and $d$ is even. In this
case, 
\[
\dim\Delta_{n,d}^{\circ}=\left\lceil \frac{d-1}{2}\right\rceil =\frac{d}{2}.
\]

\eqref{c:ldb-4}~  This is a direct consequence of the previous three assertions.
\end{proof}

\subsection{Better estimations derived from the non-existence of transposition\label{subsec:Non-existence-of-transposition}}

Although Corollary~\ref{cor:loci-dim-bound} is good enough for most
cases, it is not enough to show Theorem~\ref{thm:main-non-log} in
a few exceptional cases. Assuming that $G$ contains no pseudo-reflections,
we obtain better estimations in characteristics 2 and 3. 

In what follows, we denote by $K$ the field $k\tpars$ of Laurent
power series. Let $\Gamma_{K}:=\Gal(K^{\sep}/K)$ be its absolute
Galois group. Let $\wp\colon K\to K$ be the Artin--Schreier map
$f\mapsto f^{p}-f$. 

\begin{lem}
\label{lem:indep-trans}Suppose $p=2$. Let $L_{1},\dots,L_{m}$ be
quadratic field extensions of $K$ such that the corresponding elements
$[L_{1}],\dots,[L_{m}]\in K/\wp K$ $($via Artin--Schreier theory$)$
are linearly independent over $\FF_{2}$. Let $\varphi\colon\Gamma_{K}\to S_{n}$
be the map corresponding to the \'etale $K$-algebra 
\[
L=L_{1}\times\cdots\times L_{m}\times K^{n-2m}.
\]
Then, $\Image(\varphi)$ contains a transposition. 
\end{lem}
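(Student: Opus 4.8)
The plan is to make the homomorphism $\varphi$ completely explicit in terms of Artin--Schreier generators and then reduce the existence of a transposition to a surjectivity statement. First I would describe the underlying $\Gamma_{K}$-set. The étale algebra $L=L_{1}\times\cdots\times L_{m}\times K^{n-2m}$ corresponds to the $\Gamma_{K}$-set $\Hom_{K}(L,K^{\sep})$ of cardinality $n$, which is the disjoint union of $m$ two-element orbits (one for each quadratic factor $L_{i}$) together with $n-2m$ fixed points. Writing $L_{i}=K[\alpha_{i}]/(\alpha_{i}^{2}-\alpha_{i}-a_{i})$ with $a_{i}\in K$ a representative of $[L_{i}]$, and fixing a root $\widetilde{\alpha}_{i}\in K^{\sep}$, the two points of the $i$-th orbit are $\widetilde{\alpha}_{i}$ and $\widetilde{\alpha}_{i}+1$, and any $\gamma\in\Gamma_{K}$ acts by $\gamma(\widetilde{\alpha}_{i})=\widetilde{\alpha}_{i}+\epsilon_{i}(\gamma)$ for a well-defined $\epsilon_{i}(\gamma)\in\FF_{2}$. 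Thus $\gamma$ swaps the $i$-th orbit exactly when $\epsilon_{i}(\gamma)=1$, and under $\varphi$ the element $\gamma$ maps to the product of the transpositions indexed by $\{i:\epsilon_{i}(\gamma)=1\}$. In particular $\varphi(\gamma)$ is a single transposition precisely when the vector $\epsilon(\gamma):=(\epsilon_{1}(\gamma),\dots,\epsilon_{m}(\gamma))\in\FF_{2}^{m}$ has exactly one nonzero entry. (Since $\varphi$ is only well defined up to conjugacy and ``contains a transposition'' is conjugacy-invariant, this description suffices.)

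This reduces the lemma to showing that the group homomorphism $\epsilon=(\epsilon_{1},\dots,\epsilon_{m})\colon\Gamma_{K}\to\FF_{2}^{m}$ is surjective; for then I would simply choose $\gamma$ with $\epsilon(\gamma)=(1,0,\dots,0)$ and conclude that $\varphi(\gamma)\in\Image(\varphi)$ is a transposition. This surjectivity is the heart of the argument and is where the linear independence hypothesis enters. I would argue by contraposition: if $\epsilon$ is not surjective, its image lies in a proper subspace, so there is a nonzero $(c_{1},\dots,c_{m})\in\FF_{2}^{m}$ with $\sum_{i}c_{i}\epsilon_{i}(\gamma)=0$ for all $\gamma$. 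Setting $S=\{i:c_{i}=1\}$, a nonempty set, and $\beta=\sum_{i\in S}\widetilde{\alpha}_{i}$, this relation gives $\gamma(\beta)=\beta+\sum_{i\in S}\epsilon_{i}(\gamma)=\beta$ for every $\gamma$, hence $\beta\in K$.

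The final step is a short computation exploiting that the Frobenius is additive in characteristic $2$: since $\beta^{2}=\sum_{i\in S}\widetilde{\alpha}_{i}^{2}$, we obtain
\[
\wp(\beta)=\beta^{2}-\beta=\sum_{i\in S}(\widetilde{\alpha}_{i}^{2}-\widetilde{\alpha}_{i})=\sum_{i\in S}a_{i}.
\]
Hence $\sum_{i\in S}a_{i}\in\wp K$, that is, $\sum_{i\in S}[L_{i}]=0$ in $K/\wp K$, contradicting the assumed $\FF_{2}$-linear independence of $[L_{1}],\dots,[L_{m}]$. Therefore $\epsilon$ is surjective and the lemma follows.

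I expect the only delicate point to be the bookkeeping in this surjectivity step: correctly matching each homomorphism $\epsilon_{i}\in\Hom(\Gamma_{K},\FF_{2})$ with the class $[L_{i}]\in K/\wp K$, so that $\FF_{2}$-linear independence of the latter translates into independence of the former. This is exactly the content of Artin--Schreier duality $\Hom(\Gamma_{K},\FF_{2})\cong K/\wp K$, and the contraposition above realizes it by hand via the element $\beta$. Once this dictionary is fixed, both the reduction to surjectivity and the closing Frobenius computation are routine.
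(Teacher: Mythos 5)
Your proof is correct and follows essentially the same route as the paper: both arguments identify the image of $\varphi$ inside the elementary abelian group $\langle(1,2),(3,4),\dots,(2m-1,2m)\rangle\cong\FF_{2}^{m}$ and reduce the lemma to the surjectivity of the induced homomorphism $\Gamma_{K}\to\FF_{2}^{m}$, which is deduced from the $\FF_{2}$-linear independence of the classes $[L_{i}]$. The only difference is one of detail: where the paper invokes Artin--Schreier theory to pass from independence of the $[L_{i}]\in K/\wp K$ to independence of the corresponding characters, you verify that translation by hand via the Galois-invariant element $\beta=\sum_{i\in S}\widetilde{\alpha}_{i}$ and the computation $\wp(\beta)=\sum_{i\in S}a_{i}$.
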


\begin{proof}
Let $\alpha_{1},\dots,\alpha_{m}\colon\Gamma_{K}\to\FF_{2}$ be the
maps corresponding to $L_{1},\dots,L_{m}$, respectively. After a
suitable rearrangement of elements of $\{1,\dots,n\}$, $\Image(\varphi)$
is contained in 
\[
\FF_{2}^{m}=\langle(1,2),(3,4),\dots,(2m-1,2m)\rangle\subset S_{n}.
\]
The induced morphism $\Gamma_{K}\to\FF_{2}^{m}$ is given by
\[
\Gamma_{K}\ni\gamma\longmapsto(\alpha_{1}(\gamma),\dots,\alpha_{m}(\gamma))\in\FF_{2}^{m}, 
\]
and its image is an $\FF_{2}$-linear subspace from Artin-Schreier
theory. Since $\alpha_{1},\dots,\alpha_{m}$ are linearly independent
over $\FF_{2}$, there is no non-zero linear map $\FF_{2}^{m}\to\FF_{2}$
such that the composition $\Gamma_{K}\to\FF_{2}^{m}\to\FF_{2}$ is
the trivial map onto $0\in\FF_{2}$. This is equivalent to that $\Gamma_{K}\to\FF_{2}^{m}$
is surjective. In particular, the image of $\Gamma_{K}\to G\subset S_{n}$
contains the transposition $(1,2)\in\langle(1,2),(3,4),\dots,(2m-1,2m)\rangle=\FF_{2}^{m}$. 
\end{proof}

\begin{lem}
\label{lem:non-Gal-trans}Suppose $p=3$. Let $L'$ be a separable
cubic field extension of $K$. Let $\varphi\colon\Gamma_{K}\to S_{n}$
be the map corresponding to the \'etale $K$-algebra $L=L'\times K^{n-3}$.
If\, $L'/K$ is not Galois, then $\Image(\varphi)$ contains a transposition. 
\end{lem}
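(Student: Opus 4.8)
The plan is to analyze the image of $\varphi$ as a subgroup of $S_3$ acting on the three embeddings of $L'$ into $K^{\sep}$, and to exploit the hypothesis that $L'/K$ is \emph{not} Galois. First I would observe that $\Image(\varphi)$ is precisely the Galois group of the Galois closure $\widetilde{L'}$ of $L'$ over $K$, viewed as a transitive subgroup of $S_3$ via its action on the three roots of the minimal polynomial of a primitive element of $L'$. Since $L'/K$ is a separable cubic extension, $\Image(\varphi)$ is a transitive subgroup of $S_3$, hence either the cyclic group $A_3 \cong \ZZ/3\ZZ$ or the full symmetric group $S_3$. The key point is that $\Image(\varphi) = A_3$ holds if and only if $\widetilde{L'} = L'$, that is, if and only if $L'/K$ is itself Galois. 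Since by hypothesis $L'/K$ is \emph{not} Galois, we must have $\Image(\varphi) = S_3$, which obviously contains a transposition, say $(1,2)$.

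The main work is to justify the dichotomy cleanly in characteristic $p = 3$, where one must be careful because the relevant cubic may be an Artin--Schreier-type or more degenerate extension and cyclotomic arguments of the classical characteristic-zero flavor are unavailable. The cleanest route is to use the order of $\Image(\varphi)$: the degree of the Galois closure $[\widetilde{L'}:K]$ equals $\#\Image(\varphi)$, and for a transitive subgroup of $S_3$ this is either $3$ or $6$. If it were $3$, then $[\widetilde{L'}:K] = 3 = [L':K]$ forces $\widetilde{L'} = L'$ (as $L' \subseteq \widetilde{L'}$), so $L'/K$ would be Galois, contradicting the hypothesis. Hence $\#\Image(\varphi) = 6$ and $\Image(\varphi) = S_3$.

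I expect the main obstacle to be purely expository rather than mathematical: one must phrase the correspondence between $\varphi$, the étale algebra $L = L' \times K^{n-3}$, and the permutation action so that the three factors coming from $L'$ genuinely receive the transitive $S_3$-action while the remaining $n-3$ trivial factors $K^{n-3}$ are fixed, ensuring that the transposition in $S_3$ lands in $S_n$ as an honest transposition of two coordinates (not some larger-support permutation). This is immediate from the block structure of $L$: the $\Gamma_K$-action permutes the $\Hom_K(L', K^{\sep})$ among themselves and fixes each of the $n-3$ trivial embeddings, so the induced map factors through $S_3 \hookrightarrow S_n$ as the obvious inclusion onto the first three coordinates. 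Once this is recorded, the transposition $(1,2) \in S_3 \subseteq S_n$ produced above is a genuine transposition, completing the proof.
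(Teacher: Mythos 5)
Your proof is correct and follows essentially the same route as the paper: both arguments rest on the fact that a non-Galois separable cubic has Galois closure with group $S_3$ (since a transitive subgroup of $S_3$ of order $3$ would force the extension to be Galois), and both note that the block structure of $L = L' \times K^{n-3}$ places this $S_3$ inside $S_n$ as the permutations of three fixed coordinates, so its transpositions are honest transpositions of $S_n$. Your write-up simply spells out the dichotomy and the embedding point in more detail than the paper does.
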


\begin{proof} 
Let us regard $S_{3}$ as the subgroup of $S_{n}$ which permutes
$1$, $2$ and $3$ in $\{1,2,\dots,n\}$. Then, after a suitable
rearrangement of elements of $\{1,\dots,n\}$, we get $\Image(\varphi)\subset S_{3}$.
If $L'/K$ is not Galois, then the Galois closure has Galois group
isomorphic to $S_{3}$. This means that $\Image(\varphi)$ is $S_{3}$.
In particular, it contains the transposition $(1,2)$. 
\end{proof}

\begin{lem}
\label{lem:dim-cyc-cubic}Suppose $p=3$. Let $\Delta_{3,d,\Gal}^{\circ}\subset\Delta_{3,d}^{\circ}$
be the locus of Galois extensions. Then
\[
\dim\Delta_{3,d,\Gal}^{\circ}-\frac{d}{2}\le-1.
\]
\end{lem}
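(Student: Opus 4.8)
The plan is to reduce everything to Corollary~\ref{cor:loci-dim-bound}(2), by observing that the Galois locus can only lie over \emph{even} values of $\bd$; over such values the half-integer estimate of that corollary automatically sharpens to the integer bound we want.

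First I would describe the covers parametrized by $\Delta_{3,d,\Gal}^{\circ}$ concretely. A geometrically connected, Galois, degree-$3$ cover of $\Spec K$ is a cyclic cubic field extension $L'/K$. Since $[L':K]=3=p$ is prime and $L'/K$ is geometrically connected, the residue field extension must be trivial, so $L'/K$ is totally (wildly) ramified; by Artin--Schreier theory it is therefore of the form $L'=K[y]/(y^{3}-y-f)$ for some $f\in K$, and after subtracting an element of $\wp K$ we may assume $f$ has pole order $m$ with $m\ge 1$ and $3\nmid m$.

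Next I would compute $\bd$ on this locus. The ramification filtration of $\Gal(L'/K)\cong\ZZ/3\ZZ$ has a single jump, at $m$, so the different exponent of $L'/K$ is $(m+1)(p-1)$; as $L'/K$ is totally ramified, the discriminant exponent equals
\[
d=(m+1)(p-1)=2(m+1).
\]
Hence $\bd$ takes only even values on Galois cubic covers, which means $\Delta_{3,d,\Gal}^{\circ}=\emptyset$ whenever $d$ is odd, and in that case the asserted inequality is vacuous.

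It then remains to treat even $d$. For such $d$ the quantity $\dim\Delta_{3,d}^{\circ}-d/2$ is an integer, and Corollary~\ref{cor:loci-dim-bound}(2) in the case $p=3$ bounds it by $-1/2$; being an integer, it is in fact $\le-1$. Since $\Delta_{3,d,\Gal}^{\circ}\subseteq\Delta_{3,d}^{\circ}$ gives $\dim\Delta_{3,d,\Gal}^{\circ}\le\dim\Delta_{3,d}^{\circ}$, the claim follows. The only delicate point is the parity statement $d=2(m+1)$, i.e.\ the different/discriminant computation together with the identification of the geometrically connected Galois cubics as totally ramified Artin--Schreier extensions; everything after that is formal. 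As an independent cross-check I would note that the dimension can also be bounded directly: the extensions with fixed $m$ are cut out by the reduced Artin--Schreier coefficients $a_{i}$ for $1\le i\le m$, $3\nmid i$, $a_{m}\ne 0$, of which there are $m-\lfloor m/3\rfloor\le m=d/2-1$, giving $\dim\Delta_{3,d,\Gal}^{\circ}-d/2\le-1$ outright.
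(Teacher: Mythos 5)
Your proof is correct, and it takes a genuinely different route from the paper's. The paper identifies $\Delta_{3,d,\Gal}^{\circ}$ as the image of the forgetting map $\Delta_{\ZZ/3\ZZ,d}\to\Delta_{3}^{\circ}$ from the moduli of $\ZZ/3\ZZ$-torsors, and then quotes the explicit structure result $\Delta_{\ZZ/3\ZZ,d}\cong\GG_{m}\times\AA_{k}^{j-\lfloor j/3\rfloor-1}$ (with $j=d/2-1$ the ramification jump) from the earlier work on cyclic $p$-groups, so that $\dim\Delta_{3,d,\Gal}^{\circ}-\frac{d}{2}=-\lfloor j/3\rfloor-1\le-1$ by direct computation. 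You use the same ramification-theoretic input (the identity $d=2(j+1)$ for cyclic cubics) but only for its parity consequence: the Galois locus lies over even $d$ only, so it is empty for odd $d$; and for even $d$ you upgrade the half-integer bound of Corollary \ref{cor:loci-dim-bound}(2) to $\le-1$ by integrality and then restrict to the subset $\Delta_{3,d,\Gal}^{\circ}\subseteq\Delta_{3,d}^{\circ}$. This is more economical: it needs nothing beyond what the paper has already established (Theorem \ref{thm:dimensions-loci} via Corollary \ref{cor:loci-dim-bound}), whereas the paper's argument imports an external structure theorem but in exchange computes the exact dimension of the Galois locus rather than just a bound. Your closing ``cross-check'' (counting reduced Artin--Schreier coefficients $a_{i}$ with $3\nmid i$, $i\le m$) is essentially the paper's proof in disguise.

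One caveat deserves attention: your claim that geometric connectedness forces the residue field extension to be trivial is false when $k$ is imperfect. For $a\in k\setminus k^{3}$, the Artin--Schreier extension $K[y]/(y^{3}-y-at^{-3})$ is cyclic cubic, geometrically connected, and has purely inseparable residue field extension $k(a^{1/3})/k$ (here $e=1$, and the pole order of $f$ cannot be normalized to be prime to $3$). This does not break your argument, because the dimension of a locally constructible subset of $\Delta_{3}$ is detected on geometric points, that is, by covers of $\Spec L\tpars$ with $L$ algebraically closed; over such $L$ (which is perfect) your description --- totally ramified Artin--Schreier extension with jump $m$, $3\nmid m$, hence $d=2(m+1)$ even --- is exactly right, and a constructible set with no geometric points over odd $d$ is empty. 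You should simply phrase the reduction in terms of geometric points rather than arguing over $K=k\tpars$ itself.
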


\begin{proof}
For $d>0$, let $\Delta_{\ZZ/3\ZZ,d}$ denote the locus of $\ZZ/3\ZZ$-torsors
with discriminant exponent $d$. The locus $\Delta_{3,d,\Gal}^{\circ}$
is the image of the forgetting map $\Delta_{\ZZ/3\ZZ,d}\to\Delta_{3}^{\circ}$.
If $L/K$ is a cubic Galois extension with ramification jump $j$
($j>0$, $p\nmid j$), then $d=2(j+1)$; see \cite[Section~V.3, Lemma 3]{serre1979localfields}.
Thus, the locus $\Delta_{\ZZ/3\ZZ,d}$ is also the locus with ramification
jump $j={d}/{2}-1$. From \cite[Proposition 2.11]{yasuda2014thepcyclic},
we have
\[
\Delta_{\ZZ/3\ZZ,d}=\GG_{m}\times\AA_{k}^{j-\lfloor j/3\rfloor-1}.
\]
In particular, $\dim\Delta_{\ZZ/3\ZZ,d}=j-\lfloor j/3\rfloor$. Thus,
\begin{align*}\pushQED{\qed}
\dim\Delta_{3,d,\Gal}^{\circ}-\frac{d}{2} & =j-\left\lfloor \frac{j}{3}\right\rfloor -\frac{2(j+1)}{2}\\
 & =-\left\lfloor \frac{j}{3}\right\rfloor -1\\
 & \le-1.\qedhere \popQED
\end{align*}
\renewcommand{\qed}{}    
\end{proof}

\begin{prop}
\label{prop:key-prop}Suppose that $G$ contains no transposition.
Then, for any positive integer $d$,
\[
\dim\Delta_{G,d}-\frac{d}{2}\le-1.
\]
\end{prop}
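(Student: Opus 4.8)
The plan is to stratify $\Delta_{G,d}=\psi_{G}^{-1}(\Delta_{n,d})$ by the orbit partition of the geometric monodromy and to bound each stratum by the contributions of its geometrically connected pieces. A geometric point of $\Delta_{G}$ is a $G$-torsor over $\overline{k}\tpars$, and its monodromy image $H\subseteq G$ partitions $\{1,\dots,n\}$ into orbits; the associated degree-$n$ cover is the product of the totally ramified covers attached to these orbits. Since $\psi_{G}$ is quasi-finite and $\Delta_{n,d}$ is covered by the quasi-finite maps $\eta_{\nu,\delta}$, each partition type contributes at most
\[
\sum_{i}\Bigl(\dim\Delta_{\nu_{i},\delta_{i}}^{\circ}-\tfrac{\delta_{i}}{2}\Bigr)
\]
to $\dim\Delta_{G,d}-\tfrac{d}{2}$, the sum running over orbits, where $\nu_{i}$ is the orbit size and $\delta_{i}$ the discriminant exponent of the $i$-th piece, with $\sum_{i}\delta_{i}=d$. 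By Corollary \ref{cor:loci-dim-bound}(4) each summand is $\le0$, so it is enough to exhibit one summand $\le-1$. By the same corollary this holds as soon as some orbit has size $\ge4$, or size $3$ with $p\ne3$, or size $3$ with $p=3$ and Galois monodromy (Lemma \ref{lem:dim-cyc-cubic}). Hence I may assume every nontrivial orbit has size $2$, or size $3$ with $p=3$ and non-Galois monodromy.

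When $p\ne2$ each nontrivial piece contributes exactly $-\tfrac12$: Corollary \ref{cor:loci-dim-bound}(3) for a size-$2$ (tame) orbit, and Corollary \ref{cor:loci-dim-bound}(2) for a non-Galois cubic in characteristic $3$. Thus if at least two nontrivial orbits occur the total is $\le-1$ and we are done. If exactly one nontrivial orbit occurs, all other orbits are singletons, so the cover is $L'\times K^{\,n-\nu}$ for a single nontrivial piece $L'$; a size-$2$ piece then forces $H$ to contain the transposition exchanging the two moved indices, and a non-Galois cubic forces $H$ to contain a transposition by Lemma \ref{lem:non-Gal-trans}. Both contradict the hypothesis, and the case $d>0$ with no nontrivial orbit is vacuous. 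This settles all characteristics $\ne2$.

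The delicate case is $p=2$ with every orbit of size $\le2$, where a size-$2$ piece contributes $0$ and the crude bound only gives $\le0$. Let $\alpha_{1},\dots,\alpha_{m}\in K/\wp K$ be the Artin--Schreier classes of the quadratic pieces, with ramification jumps $j_{1},\dots,j_{m}$; over $\overline{k}$ they are ramified and nonzero and span a subspace $W$ whose distinct jumps are $u_{1}<\dots<u_{r}$. The stratum is the moduli of the subspace $W$, which a direct Artin--Schreier parametrization (in the spirit of \cite[Proposition 2.11]{yasuda2014thepcyclic}) shows has dimension $\tfrac12\sum_{k}(u_{k}+1)$, while $d=\sum_{i}(j_{i}+1)$. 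The gain comes from the criterion, underlying Lemma \ref{lem:indep-trans}, that $H$ contains the transposition $e_{i}$ exactly when $\alpha_{i}\notin\langle\alpha_{j}:j\ne i\rangle$. Applying this to the top jump $u_{r}$: were it attained by a single $\alpha_{i}$, that class would lie outside the span of the others and produce a transposition; so at least two of the $\alpha_{i}$ have jump $u_{r}$. Writing $N_{k}=\#\{i:j_{i}=u_{k}\}$, we get $N_{r}\ge2$ and
\[
d-\sum_{k}(u_{k}+1)=\sum_{k}(N_{k}-1)(u_{k}+1)\ge(N_{r}-1)(u_{r}+1)\ge u_{r}+1\ge2,
\]
whence $\dim\Delta_{G,d}-\tfrac{d}{2}\le\tfrac12\sum_{k}(u_{k}+1)-\tfrac{d}{2}\le-1$.

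The main obstacle is this final case. One must justify that the stratum is governed by the single elementary-abelian extension cut out by $W$ — so that its dimension is $\tfrac12\sum_{k}(u_{k}+1)$ rather than the $\tfrac{d}{2}$ that $m$ independent quadratic extensions would give — and must organise the jump bookkeeping so that the bound survives the degenerate subloci where the leading terms of several $\alpha_{i}$ coincide (there the jumps of sums drop, but such loci have strictly smaller dimension, so the estimate only improves). By contrast, the reductions of the first two paragraphs are routine once the covering by the $\eta_{\nu,\delta}$ and the quasi-finiteness of $\psi_{G}$ are granted; the heart of the argument is the interplay between the arithmetic of $K/\wp K$ and the transposition criterion $e_{i}\in H\Leftrightarrow\alpha_{i}\notin\langle\alpha_{j}:j\ne i\rangle$.
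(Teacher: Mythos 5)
Your first two paragraphs reproduce the paper's own argument: the same covering of $\Delta_{n,d}$ by the quasi-finite maps $\eta_{\nu,\delta}$, the same appeal to Corollary \ref{cor:loci-dim-bound}, and the same use of Lemmas \ref{lem:non-Gal-trans} and \ref{lem:dim-cyc-cubic} to settle every characteristic other than $2$; that part is correct. The gap is in the case $p=2$, which is the heart of the proposition. Your key claim --- that the stratum is ``the moduli of the subspace $W$'' and has dimension $\frac{1}{2}\sum_{k}(u_{k}+1)$, one summand per \emph{distinct} jump --- is false. The residue field $\overline{k}$ is infinite, so $W$ may contain several $\FF_{2}$-independent classes with the \emph{same} jump (e.g.\ $t^{-u}$ and $ct^{-u}$ with $c\in\overline{k}\setminus\FF_{2}$); the moduli of subspaces with jump filtration $u_{1}<\cdots<u_{r}$ and multiplicities $m_{1},\dots,m_{r}$ has dimension $\frac{1}{2}\sum_{k}m_{k}(u_{k}+1)$, not $\frac{1}{2}\sum_{k}(u_{k}+1)$. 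Concretely, take $\alpha_{1},\alpha_{2},\alpha_{3}$ generic of jump $u$ and $\alpha_{4}=\alpha_{1}+\alpha_{2}+\alpha_{3}$: this tuple satisfies your no-transposition criterion, has the single distinct jump $u$, and sweeps out a locus of dimension $3(u+1)/2$, not $(u+1)/2$. Your displayed chain of inequalities fails as well: $\sum_{k}(N_{k}-1)(u_{k}+1)\ge(N_{r}-1)(u_{r}+1)$ requires $N_{k}\ge1$ at every distinct jump of $W$, but jumps of $W$ can arise purely from cancellation; for two copies each of $t^{-5}$ and $t^{-5}+t^{-3}$ one gets $N=0$ at the jump $3$, and then $d-\sum_{k}(u_{k}+1)=14<18=(N_{r}-1)(u_{r}+1)$. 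In both examples the conclusion $\dim-\frac{d}{2}\le-1$ survives, but not by your accounting; as written, the $p=2$ case is unproved.

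The argument can be repaired in two ways. Staying with your strategy, the inequality you actually need is $\sum_{i}(j_{i}+1)\ge\sum_{k}m_{k}(u_{k}+1)+2$: extract from the spanning set a basis of $W$; the sorted jumps of any basis dominate the filtration jumps of $W$ termwise, so the basis alone contributes at least $\sum_{k}m_{k}(u_{k}+1)$; and your (correct, though unproved) criterion $e_{i}\in H\Leftrightarrow\alpha_{i}\notin\langle\alpha_{j}:j\ne i\rangle$ forces the spanning set to be linearly dependent, hence strictly larger than a basis, and the extra class is worth at least $2$. The paper instead avoids the moduli of $W$ altogether: linear dependence means some $\alpha_{i}=\sum_{j\ne i}a_{j}\alpha_{j}$, so the stratum is covered by finitely many maps forgetting the $i$-th quadratic factor, whence $\dim\Delta_{\nu,\delta}^{(G)}\le\sum_{j\ne i}\dim\Delta_{2,\delta_{j}}^{\circ}$; since the forgotten factor has $\dim\Delta_{2,\delta_{i}}^{\circ}=\delta_{i}/2\ge1$ and every term $\dim\Delta_{2,\delta_{j}}^{\circ}-\delta_{j}/2$ is $\le0$, the bound $-1$ follows. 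Either repair closes the gap, but the one you wrote does not.
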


\begin{proof}
Let $\nu=(\nu_{1},\dots,\nu_{l})$ be a partition of $n$ by positive
integers satisfying $\nu_{1}\ge\nu_{2}\ge\cdots\ge\nu_{l}$, and let
$\delta=(\delta_{1},\dots,\delta_{l})$ be a partition of $d$ by
non-negative integers such that 
\begin{gather*}
\delta_{i}>0\Longleftrightarrow\nu_{i}>1.
\end{gather*}
We write $\Delta_{\nu,\delta}:=\Image(\eta_{\nu,\delta})$. Then,
$\Delta_{n}=\bigcup_{(\nu,\delta)}\Delta_{\nu,\delta},$ where $(\nu,\delta)$
runs over pairs of partitions as above. Moreover, if $o\in\Delta_{n}$
denotes the point corresponding to the trivial cover
\[
\overset{n\text{ copies}}{\overbrace{\Spec K\amalg\cdots\amalg\Spec K}}\lra\Spec K,
\]
then $\{o\}=\Delta_{(1,\dots,1),(0,\dots,0)}$ and 
\[
\Delta_{n}\setminus\{o\}=\bigcup_{(\nu,\delta)\ne((1,\dots,1),(0,\dots,0))}\Delta_{\nu,\delta}.
\]
We define $\Delta_{\nu,\delta}^{(G)}:=\Image(\psi_{G})\cap\Delta_{\nu,\delta}$,
where $\psi_{G}$ is the map $\Delta_{G}\to\Delta_{n}$ introduced
at the beginning of Section~\ref{subsec:Dimensions-of-loci}. To prove
the proposition, it suffices to show that for every pair $(\nu,\delta)$
as above other than $((1,\dots,1),(0,\dots,0))$, we have
\begin{equation}
\dim\Delta_{\nu,\delta}^{(G)}-\frac{\sum_{i}\delta_{i}}{2}\le-1.\label{eq:ineq-wanted}
\end{equation}
Note that 
\begin{align*}
 & \dim\Delta_{\nu,\delta}^{(G)}-\frac{\sum_{i}\delta_{i}}{2}\\
 & \le\dim\Delta_{\nu,\delta}-\frac{\sum_{i}\delta_{i}}{2}.\\
 & =\sum_{i}\left(\dim\Delta_{\nu_{i},\delta_{i}}^{\circ}-\frac{\delta_{i}}{2}\right).
\end{align*}

{\it Case} $\nu_{1}\ge4$: From Corollary~\ref{cor:loci-dim-bound}, Inequality
(\ref{eq:ineq-wanted}) holds in this case. 

{\it Case} $\nu_{1}=3$ {\it and} $p\ne3$: Again, from Corollary~\ref{cor:loci-dim-bound},
Inequality (\ref{eq:ineq-wanted}) holds.

{\it Case} $\nu_{1}=3$ {\it and} $p=3$: If $\nu$ has at least two entries and
$\nu_{2}\ge2$, then 
\begin{align*}
\sum_{i}\left(\dim\Delta_{\nu_{i},\delta_{i}}^{\circ}-\frac{\delta_{i}}{2}\right) & \le\left(\dim\Delta_{\nu_{1},\delta_{1}}^{\circ}-\frac{\delta_{1}}{2}\right)+\left(\dim\Delta_{\nu_{2},\delta_{2}}^{\circ}-\frac{\delta_{2}}{2}\right)\\
 & \le-\frac{1}{2}-\frac{1}{2}\\
 & =-1.
\end{align*}
If $\nu$ is of the form $(3,1,\dots,1)$ (including the case $\nu=(3)$),
then from Lemma~\ref{lem:non-Gal-trans}, we have 
\[
\Delta_{\nu,\delta}^{(G)}\subset\eta_{\nu}\left(\Delta_{3,d,\Gal}^{\circ}\times\left(\Delta_{1,0}^{\circ}\right)^{d-3}\right).
\]
Since $\dim(\Delta_{1,0}^{\circ})^{d-3}=0$, from Lemma~\ref{lem:dim-cyc-cubic},
\[
\dim\Delta_{\nu,\delta}^{(G)}-\frac{d}{2}\le\dim\Delta_{3,d,\Gal}^{\circ}-\frac{d}{2}\le-1.
\]

{\it Case} $\nu_{1}=2$ {\it and} $p\ne2$: For a quadratic extension $L'/K$,
the map $\Gamma_{K}\to S_{n}$ corresponding to $L=L'\times K^{d-2}$
has image $\langle(1,2)\rangle\subset S_{n}$, which contains the
transposition $(1,2)$. From the assumption that $G$ contains no
transposition, $\Delta_{\nu,\delta}^{(G)}$ is non-empty only when
$\nu$ has at least two entries and $\nu_{1}=\nu_{2}=2$. Then,
\begin{align*}
\sum_{i}\left(\dim\Delta_{\nu_{i},\delta_{i}}^{\circ}-\frac{d_{i}}{2}\right) & \le\left(\dim\Delta_{\nu_{1},\delta_{1}}^{\circ}-\frac{d_{1}}{2}\right)+\left(\dim\Delta_{\nu_{2},\delta_{2}}^{\circ}-\frac{\delta_{2}}{2}\right)\\
 & \le-\frac{1}{2}-\frac{1}{2}\\
 & =-1.
\end{align*}

{\it Case} $\nu_{1}=2$ {\it and} $p=2$: From Lemma~\ref{lem:indep-trans}, if
an \'etale algebra of the form 
\[
L_{1}\times\cdots\times L_{m}\times K^{n-2m}
\]
with $L_{1},\dots,L_{m}$ quadratic extensions of $K$ corresponds
to a point of $\Delta_{\nu,\delta}^{(G)}$, then the classes $[L_{1}],\dots,[L_{m}]$ in $K/\wp(K)$
are linearly dependent over $\FF_{2}$. Using this fact, we will give
an upper bound of the dimension of~$\Delta_{\nu,\delta}^{(G)}$. 

Let $\nu=(\nu_{1},\dots,\nu_{l})$ be a partition of $n$ such that
$\nu_{1}=\dots=\nu_{r}=2$ and $\nu_{r+1}=\cdots=\nu_{l}=1$, and let
$\delta=(\delta_{1},\dots,\delta_{l})$ be a partition of $d$. For
$i\in\{1,\dots,r\}$ and $\ba=(a_{1},\dots,a_{r})\in\FF_{2}^{r}$
with $a_{i}=0$, we consider the map
\begin{align*}
\theta_{i,\ba}\colon\prod_{\substack{1\le j\le r\\
j\ne i
}
}\Delta_{2,\delta_{j}}^{\circ} & \lra\Delta_{n}\\
(L_{1},\dots,\check{L_{i}},\dots,L_{r}) & \longmapsto L_{1}\times\cdots\times\check{L_{i}}\times\cdots\times L_{r}\times\left(\sum_{\substack{1\le j\le r\\
j\ne i
}
}^ {}a_{j}L_{j}\right)\times K^{d-2r}.
\end{align*}
Here the symbol $\check{\cdot}$ means that the designated entry is
omitted, and $\sum_{j\ne i}a_{j}L_{j}$ means the quadratic extension
of $K$ corresponding to $\sum_{j\ne i}a_{j}[L_{j}]\in K/\wp K$.
The fact mentioned above implies that $\Delta_{\nu,\delta}^{(G)}$
is contained in $\bigcup_{i,\ba}\Image(\theta_{i,\ba})$. It follows
that if $\Delta_{\nu,\delta}^{(G)}\ne\emptyset$, then
\[
\dim\Delta_{\nu,\delta}^{(G)}\le\max_{\substack{i\\
\Delta_{\nu_{i},\delta_{i}}^{\circ}\ne\emptyset
}
}\sum_{\substack{1\le j\le r\\
j\ne i
}
}\dim\Delta_{\nu_{j},\delta_{j}}^{\circ}.
\]
Suppose that the maximum on the right side is attained at $i=i_{0}$.
Then, $\Delta_{\nu_{i_{0}},\delta_{i_{0}}}^{\circ}\ne\emptyset$, and
from Theorem~\ref{thm:dimensions-loci}, $\dim\Delta_{\nu_{i_{0}},\delta_{i_{0}}}^{\circ}>0$.
We conclude that

\begin{align*}\pushQED{\qed}
\dim\Delta_{\nu,\delta}^{(G)}-\frac{\sum_{i}\delta_{i}}{2} & \le\sum_{\substack{1\le j\le r\\
j\ne i_{0}
}
}\dim\Delta_{\nu_{j},\delta_{j}}^{\circ}-\frac{\sum_{i}\delta_{i}}{2}\\
 & \le\sum_{1\le i\le r}\left(\dim\Delta_{\nu_{i},\delta_{i}}^{\circ}-\frac{d}{2}\right)-\dim\Delta_{\nu_{i_{0}},\delta_{i_{0}}}^{\circ}\\
 & \le-\dim\Delta_{\nu_{i_{0}},\delta_{i_{0}}}^{\circ}\\
 & \le-1.\qedhere \popQED
\end{align*}
\renewcommand{\qed}{}    
\end{proof}

\section{Proofs of Theorems~\ref{thm:main-non-log} and~\ref{thm:main-log}\label{sec:Proof of main theorems}}

\subsection{Proof of Theorem~\ref{thm:main-log}}

We first prove Theorem~\ref{thm:main-log}. We follow the notation
from Section~\ref{sec:Permutation-actions}. From Corollary~\ref{cor:loci-dim-bound}(4) and Pro\-po\-si\-tion~\ref{prop:klt-lc}, the log pair $(X,B)$ is log
canonical. From Theorem~\ref{thm:dimensions-loci}, 
\begin{align*}
\dim\Delta_{\nu,\delta}^{(G)}-\frac{d}{2} & \le\sum_{i}\left(\dim\Delta_{\nu_{i},\delta_{i}}^{\circ}-\frac{\delta_{i}}{2}\right)\\
 & \le\sum_{i}\left(\left\lceil \frac{\delta_{i}-\nu_{i}+1}{p}\right\rceil -\frac{\delta_{i}}{2}\right).
\end{align*}
If $p>2$, since 
\[
\lim_{\delta_{i}}\left\lceil \frac{\delta_{i}-\nu_{i}+1}{p}\right\rceil -\frac{\delta_{i}}{2}=-\infty,
\]
we have 
\[
\lim_{\nu,\delta}\left(\dim\Delta_{\nu,\delta}^{(G)}-\frac{d}{2}\right)=-\infty.
\]
Proposition~\ref{prop:klt-lc} shows $(X,B)$ is Kawamata log terminal
if $p\ne2$. We have completed the proof of Theorem~\ref{thm:main-log}. 

\subsection{Proof of Theorem~\ref{thm:main-non-log} }

If $G$ contains no transpositions, then Theorem~\ref{thm:main-non-log}
follows from Propositions~\ref{prop:canonical-criterion} and~\ref{prop:key-prop}.
In the general case, we will prove Theorem~\ref{thm:main-non-log}
by reducing it to Theorem~\ref{thm:main-log} and the case without
transpositions.

\subsubsection*{Case $p\protect\ne2$}

Let $f\colon Y\to X$ be a proper birational morphism from a normal
variety $Y$, and let $E$ be a prime divisor on $Y$. If $f(E)\not\subset\Supp(B)$,
then $X$ has quotient singularities associated to permutation actions
without transpositions at general points of $f(E)$. Thus, from the
case without transposition, which was considered at the beginning
of this subsection, we have 
\[
\discrep{E;X,B}=\discrep{E;X}\ge0.
\]
If $f(E)\subset\Supp(B)$, then since $B$ is a 2-Cartier effective
divisor, $f^{*}B$ has multiplicity at least $1/2$ along $E$. From
Theorem~\ref{thm:main-log}, $(X,B)$ is Kawamata log terminal. Since
$K_{X}+B$ is 2-Cartier, $\discrep{E;X,B}\ge-1/2$. Thus, 
\begin{align*}
\discrep{E;X} & =\discrep{E;X,B}+\mult_{E}(f^{*}B).\\
 & \ge-1/2+1/2\\
 & \ge0.
\end{align*}
We have proved Theorem (\ref{thm:main-non-log}) when $p\ne2$. 

\subsubsection*{Case $p=2$}

Let $f\colon Y\to X$ be a proper birational morphism from a normal
variety $Y$, and let $E$ be a prime divisor on $Y$. If $f(E)\not\subset\Supp(B)$,
then the same argument as in the case $p\ne2$ shows that $\discrep{E;X}\ge0$.
If $f(E)\subset\Supp(B)$, then since $B$ is a Cartier effective
divisor, $f^{*}B$ has multiplicity at least $1$ along $E$. From
Theorem~\ref{thm:main-log}, $(X,B)$ is log canonical and hence $\discrep{E;X,B}\ge-1$.
Thus, 
\begin{align*}
\discrep{E;X} & =\discrep{E;X,B}+\mult_{E}(f^{*}B).\\
 & \ge-1+1\\
 & \ge0.
\end{align*}
We have proved Theorem (\ref{thm:main-non-log}) in the case $p=2$.



\begin{thebibliography}{Yam21b+++}

\bibitem[Bea83]{beauville1983varietes}
A.~Beauville, 
{\em Vari{\'e}t{\'e}s {K{\"a}hleriennes} dont la premi{\`e}re classe de
  {Chern} est nulle}, J.~Differential Geom.\ \textbf{18} (1983), no.~4, 755--782, \doi{10.4310/jdg/1214438181}.

\bibitem[BK05]{brion2005frobenius}
M.~Brion and S.~Kumar, 
{\em Frobenius Splitting Methods in Geometry and Representation
  Theory}, Progr.\ Math., vol.~231, Birkh\"auser Boston, Inc., Boston, MA, 2005, \doi{10.1007/b137486}.

\bibitem[Bro05]{broer2005thedirect}
A.~Broer, 
{\em The {Direct} {Summand} {Property} in {Modular} {Invariant} {Theory}}, 
Transform.\ Groups \textbf{10} (2005), no.~1, 5--27, \doi{10.1007/s00031-005-1001-0}.

\bibitem[CF15]{calm`es2015groupes}
B.~Calm{\`e}s and J.~Fasel, 
{\em Groupes classiques}, in: {\em Autours des sch{\'e}mas en groupes. {Vol}.~{II}}, pp.~1--133, Panor.\ Synth{\`e}ses, vol.~46, Soc.\ Math.\ France, Paris,
2015.

\bibitem[CDG20]{chen2020modular}
Y.~Chen, R.~Du and Y.~Gao, 
{\em Modular quotient varieties and singularities by the cyclic group of
  order 2p},   Comm.\ Algebra, \textbf{48}
  (2020), no.~12, 5490--5500, \doi{10.1080/00927872.2020.1791148}.


\bibitem[Ems17]{emsalem2017twisting}
M.~Emsalem, 
{\em Twisting by a Torsor} in: A.~Aryasomayajula, I.~Biswas, A.\,S.~Morye and  A.\,J.~Parameswaran (eds), {\em Analytic and {Algebraic} {Geometry}}, pp.~125--152, Springer, Singapore, 2017, \doi{10.1007/978-981-10-5648-2_9}.

\bibitem[Fan23]{fan2023crepant}
L.~Fan, 
{\em Crepant resolution of $\mathbf{A}^4/{A}_4$ in characteristic 2}, 
Proc.\ Japan Acad.\ Ser.~A Math.\ Sci.\ \textbf{99} (2023), no.~9, 71--76, \doi{10.3792/pjaa.99.014}. 

\bibitem[HW02]{hara2002fregular}
  N.~Hara and K.-i.~Watanabe,
{\em F-regular and {F}-pure rings vs. log terminal and log canonical
  singularities}, J.~Algebraic Geom.\ \textbf{11}
(2001), no.~2, 363--392, \doi{10.1090/s1056-3911-01-00306-x}.

\bibitem[HS23]{hashimoto2023frobenius}
M.~Hashimoto and A.\,K.~Singh, 
{\em Frobenius representation type for invariant rings of finite groups},
preprint \arXiv{2312.11786} (2023). 

\bibitem[HH92]{hochster1992infinite}
M.~Hochster and C.~Huneke, 
{\em Infinite integral extensions and big {Cohen}--{Macaulay}
  algebras}, Ann.\ of Math.\ (2) \textbf{135} (1992), no.~1, 53--89, \doi{10.2307/2946563}.

\bibitem[Kol13]{kollar2013singularities}
J.~Koll\'{a}r, \emph{Singularities of the Minimal Model Program} (with a collaboration of S.~Kov\'{a}cs), Cambridge Tracts in Math., vol.~200, Cambridge Univ.\ Press, Cambridge, 2013, \doi{10.1017/CBO9781139547895}.

\bibitem[Kra62]{krasner1962nombredes}
M.~Krasner, 
{\em Nombre des extensions d'un degr{\'e} donn\'e d'un corps {$\mathfrak p $}-adique:  \'enonc\'e des r{\'e}sultats et pr\'eliminaires de la d\'emonstration (espace des  polynomes, transformation $T$)},
C.\,R.\ Acad.\ Sci.\ Paris \textbf{254} (1962), 3470--3472.

\bibitem[Kra66]{krasner1966nombredes}
\bysame, {\em Nombre des extensions d'un degr{\'e} donne d'un corps {$\mathfrak p $}-adique},
in: {\em Les {T}endances {G}\'eom.\ en {A}lg\`ebre et {T}h\'eorie des {N}ombres}, pp.~143--169, Colloq.\ Internat.\ CNRS, No.~143, CNRS, Paris, 1966.
       
\bibitem[KT01]{kumar2001frobenius}
S.~Kumar and J.\,F.~Thomsen, 
{\em Frobenius splitting of {Hilbert} schemes of points on surfaces}, 
Math.\ Ann.\ \textbf{319} (2001), no.~4, 797--808,  \doi{10.1007/PL00004459}.

\bibitem[Rei87]{reid1987youngpersons}
M.~Reid, \emph{Young person's guide to canonical singularities}, in: \emph{Algebraic  Geometry, {B}owdoin, 1985} ({B}runswick, {M}aine, 1985), pp.~345--414, Proc.\ Sympos.\ Pure  Math., vol.~46, Part~1, Amer.\ Math.\ Soc., Providence, RI, 1987, \doi{10.1090/pspum/046.1/927963}.

\bibitem[Ser79]{serre1979localfields}
 J.\,P.~Serre, \emph{Local Fields} (translated from the French by M.\,J.~Greenberg), Grad.\ Texts in Math., vol.~67, Springer-Verlag, New York-Berlin, 1979, \doi{10.1007/978-1-4757-5673-9}.

\bibitem[Tan22]{tanno2022onconvergence}
M.~Tanno, 
{\em On convergence of stringy motives of wild $p^n$-cyclic quotient
  singularities}, 
Comm.\ Algebra, \textbf{50} (2021), no.~5, 2184--2197, 
\doi{10.1080/00927872.2021.2002884}.

\bibitem[TY21]{tanno2021thewild}
M.~Tanno and T.~Yasuda, 
{\em The wild {McKay} correspondence for cyclic groups of prime power
  order}, Illinois J.~Math.\ \textbf{65} (2021), no.~3, 619--654, \doi{10.1215/00192082-9402078}. 

\bibitem[TY23]{tonini2023moduliof}
F.~Tonini and T.~Yasuda, 
{\em Moduli of formal torsors {II}}, 
Ann.\ Inst.\ Fourier (Grenoble) \textbf{73} (2023), no.~2, 511--558, \doi{10.5802/aif.3533}.

\bibitem[TZ20]{tonini2020essentially}
F.~Tonini and L.~Zhang, 
{\em Essentially {Finite} {Vector} {Bundles} on {Normal} {Pseudo}-{Proper}
  {Algebraic} {Stacks}}, 
Doc.\ Math.\ \textbf{25} (2020), 159--169, \doi{10.4171/dm/742}.

\bibitem[WY15]{wood2015massformulas}
  M.\,M.~Wood and T.~Yasuda,
{\em Mass Formulas for Local {Galois} Representations and Quotient
  Singularities. {I}: A Comparison of Counting Functions}, 
Int.\ Math.\ Res.\ Not.\ IMRN 2015, no.~23, 12590--12619, \doi{10.1093/imrn/rnv074}.

\bibitem[Yam21a]{yamamoto2021crepant}
T.~Yamamoto, 
{\em Crepant resolutions of quotient varieties in positive characteristics
  and their {Euler} characteristics}, preprint \arXiv{2106.11526} (2021).

\bibitem[Yam21b]{yamamoto2021pathological}
\bysame,
{\em Pathological {Quotient} {Singularities} in {Characteristic} {Three}
  {Which} {Are} {Not} {Log} {Canonical}}, 
 Michigan Math.~J.\  \textbf{70} (2021), no.~4, 793--806, \doi{10.1307/mmj/1600308172}. 

\bibitem[Yas12]{yasuda2012puresubrings}
T.~Yasuda,
{\em Pure subrings of regular local rings, endomorphism rings and
  {Frobenius} morphisms}, 
J.~Algebra \textbf{370} (2012), 15--31, \doi{10.1016/j.jalgebra.2012.07.027}.

\bibitem[Yas14]{yasuda2014thepcyclic}
\bysame,
{\em The $p$-cyclic {McKay} correspondence via motivic integration}, 
Compos.\ Math.\ \textbf{150} (2014), no.~7, 1125--1168, \doi{10.1112/S0010437X13007781}.

\bibitem[Yas19]{yasuda2019discrepancies}
\bysame,
{\em Discrepancies of $p$-Cyclic Quotient Varieties}, 
J.~Math.\ Sci.\ Univ.\ Tokyo \textbf{26} (2019), no.~1, 1--14. 

\bibitem[Yas21]{yasuda2021motivic}
\bysame,
{\em Motivic integration, the {McKay} correspondence and wild ramification
  (tentative title)}, 
version: 2021-11-11, available at
  \url{http://www4.math.sci.osaka-u.ac.jp/~takehikoyasuda/pdfs/BookWild(v2021-11-11).pdf} 
  (2021).

\bibitem[Yas24]{yasuda2024motivic}
\bysame,
{\em Motivic integration over wild {Deligne}{\textendash}{Mumford} stacks}, 
Algebr.\ Geom.\ \textbf{11} (2024), no.~2, 178--255, \doi{10.14231/AG-2024-007}.

\bibitem[Yas25]{yasuda2024motivic2}
  \bysame,
{\em Motivic versions of mass formulas by {Krasner}, {Serre} and
  {Bhargava}}, Forum Math.\, Sigma \textbf{13} (2025), Paper No.~e148, 
\doi{10.1017/fms.2025.10099}.

\end{thebibliography}
\end{document}